\theoremstyle{definition}
\newtheorem{remark}[subsection]{Remark}
\newtheorem{construction}[subsection]{Construction}
\newtheorem{question}[subsection]{Question}
\newtheorem{example}[subsection]{Example}
\newtheorem{defn}[subsection]{Definition}
\theoremstyle{plain}
\newtheorem{lemma}[subsection]{Lemma}
\newtheorem{prop}[subsection]{Proposition}
\newtheorem{cor}[subsection]{Corollary}
\newtheorem{theorem}[subsection]{Theorem}
\newtheorem{conj}[subsection]{Conjecture}
\numberwithin{equation}{subsection}
\setlist[enumerate,1]{(a)}
\newcommand{\Z}{\mathbb{Z}}
\newcommand{\Q}{\mathbb{Q}}
\newcommand{\F}{\mathbb{F}}
\newcommand{\C}{\mathbb{C}}
\newcommand{\cA}{\mathcal{A}}
\newcommand{\cD}{\mathcal{D}}
\newcommand{\cF}{\mathcal{F}}
\newcommand{\cG}{\mathcal{G}}
\newcommand{\cH}{\mathcal{H}}
\newcommand{\cI}{\mathcal{I}}
\newcommand{\cJ}{\mathcal{J}}
\newcommand{\cL}{\mathcal{L}}
\newcommand{\cO}{\mathcal{O}}
\newcommand{\cP}{\mathcal{P}}
\newcommand{\cX}{\mathcal{X}}
\newcommand{\cY}{\mathcal{Y}}
\newcommand{\cZ}{\mathcal{Z}}
\newcommand{\cHom}{\mathcal{H}\mathit{om}}
\newcommand{\Perv}{\mathrm{Perv}}
\newcommand{\Hom}{\mathrm{Hom}}
\newcommand{\End}{\mathrm{End}}
\newcommand{\Shv}{\mathrm{Shv}}
\newcommand{\Ext}{\mathrm{Ext}}
\newcommand{\id}{\mathrm{id}}
\newcommand{\Id}{\mathrm{id}}
\newcommand{\cl}{\mathrm{cl}}
\newcommand{\diag}{\mathrm{diag}}
\newcommand{\Spec}{\mathrm{Spec}}
\newcommand{\Spf}{\mathrm{Spf}}
\newcommand{\gr}{\mathrm{gr}}
\newcommand{\chara}{\mathrm{char}}
\newcommand{\red}{\mathrm{red}}
\newcommand{\alg}{\mathrm{alg}}
\newcommand{\Fr}{\mathrm{Fr}}
\newcommand{\Gal}{\mathrm{Gal}}
\newcommand{\IH}{\mathrm{IH}}
\newcommand{\IC}{\mathrm{IC}}
\newcommand{\Qlb}{\overline{\Q_\ell}}
\newcommand{\simto}{\xrightarrow{\sim}}
\newcommand{\pH}{\prescript{p}{}{\cH}}
\newcommand{\ptau}{\prescript{p}{}{\tau}}
\begin{document}
\title{Slopes and weights of $\ell$-adic cohomology of rigid spaces}
\author{Qing Lu\thanks{School of Mathematical Sciences, Beijing Normal University, Beijing
100875, China; email: \texttt{qlu@bnu.edu.cn}.} \and Weizhe
Zheng\thanks{Morningside Center of Mathematics, Academy of Mathematics and
Systems Science, Chinese Academy of Sciences, Beijing 100190, China;
University of the Chinese Academy of Sciences, Beijing 100049, China; email:
\texttt{wzheng@math.ac.cn}.}}\date{} \maketitle

\begin{abstract}
We prove that Frobenius eigenvalues of $\ell$-adic cohomology and 
$\ell$-adic intersection cohomology of rigid spaces over $p$-adic local 
fields are algebraic integers and we give bounds for their $p$-adic 
valuations. As an application, we deduce bounds for their weights, proving 
conjectures of Bhatt, Hansen, and Zavyalov. We also give examples of  
monodromy-pure perverse sheaves on projective curves with non 
monodromy-pure cohomology, answering a question of Hansen and Zavyalov. 
\end{abstract}

\section{Introduction}

Let $K$ be a $p$-adic local field of finite residue field $\F_q$ and let 
$\ell\neq p$ be a prime. Arithmetic properties of $\ell$-adic cohomology of 
rigid spaces over $K$, including the $\ell$-adic intersection cohomology 
defined by Bhatt and Hansen \cite{BH}, were recently studied by Hansen and 
Zavyalov \cite{HZ}. In this paper we study further arithmetic properties, 
including integrality, $p$-adic valuations, and weights of Frobenius 
eigenvalues. 

Before stating our results let us fix some notation and convention. For an 
algebraic number $\alpha$, we call \emph{$q$-slopes} of $\alpha$ the rational 
numbers $v_q(\iota(\alpha))$, where $v_q$ is the valuation on $\C_p$ 
normalized by $v_q(q)=1$ and $\iota$ runs through all embeddings $\iota\colon 
\Q(\alpha)\to \C_p$. Let $G_K\to G_{\F_q}$ be the canonical projection of 
Galois groups and let $\Fr_{q}\in G_{\F_q}$ be the geometric Frobenius. Let 
$C$ be a completed algebraic closure of $K$. For a quasi-compact 
quasi-separated (qcqs) rigid space $X$ over $K$, we put 
$\IH^i_{(c)}(X_C,\Q_\ell)=H^{i}_{(c)}(X_C,\IC_{X,\Q_\ell})$ as in \cite{BH} 
and \cite{HZ}, where $\IC_{X,\Q_\ell}$ denotes the intersection complex of 
$X$. 

Our main result on slopes is the following. 

\begin{theorem}\label{t.main}
Let $X$ be a qcqs rigid space of dimension $d$ over $K$ and let $g\in G_K$ be 
an element projecting to $\Fr_{q}$. 
\begin{enumerate}
\item For all $i\in \Z$, the eigenvalues of $g$ acting on 
    $\IH^{i-d}(X_C,\Q_\ell)$, $\IH^{i-d}_c(X_C,\Q_\ell)$, and 
    $H^i_c(X_C,\Q_\ell)$ are algebraic integers that are units away from 
    $p$ and their $q$-slopes are contained in $[0,i]\cap [i-d,d]$. 
\item For all $i\in \Z$, the eigenvalues of $g$ acting on 
    $H^i(X_C,\Q_\ell)$ are algebraic integers that are units away from $p$ 
    and their $q$-slopes are contained in $[0,i]$. 
\end{enumerate} 
\end{theorem}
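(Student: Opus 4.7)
The plan is to reduce the theorem to the classical slope estimates of Deligne and Katz--Messing for smooth proper varieties over $\F_q$ by combining alterations, strictly semistable formal models, and the Rapoport--Zink weight spectral sequence. I would first treat the smooth proper case in which $X$ admits a strictly semistable proper formal model $\mathfrak X$ over $\cO_K$. Here proper base change and nearby cycles give $H^i(X_C,\Q_\ell)\cong H^i(\mathfrak X_{\bar s}, R\Psi\Q_\ell)$, and the Rapoport--Zink weight spectral sequence converges to this with $E_1$-terms that are $\Q_\ell$-cohomologies of smooth proper $\F_q$-varieties (intersections of irreducible components of $\mathfrak X_s$) with non-negative Tate twists. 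Katz--Messing applied termwise yields algebraicity, the unit-away-from-$p$ property, and the $q$-slope bound $[0,i]\cap[i-d,d]$; these properties pass to subquotients, so they hold for the abutment, proving (a) in this case.

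To handle a general qcqs rigid $X$, I would invoke alteration results for rigid spaces (Temkin and its refinements) to obtain a proper surjective generically finite $f\colon \tilde X\to X$ with $\tilde X$ smooth proper and admitting a strictly semistable model. The decomposition theorem for intersection complexes then exhibits $\IC_{X,\Q_\ell}$ as a direct summand of $Rf_*\Q_\ell[d]$, modulo contributions supported on lower-dimensional closed subsets that are handled by induction on $d$. The slope bounds for $\IH^{i-d}_{(c)}(X_C,\Q_\ell)$ thus descend from those for $H^{\ast}(\tilde X_C,\Q_\ell)$. For $H^i_c$ and $H^i$ of an arbitrary qcqs rigid $X$, one compactifies using Temkin's theorem and runs the excision long exact sequence, again reducing by induction to the smooth proper case; the weaker bound $[0,i]$ in (b) reflects the absence of Poincar\'e duality without properness.

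The principal geometric input---and the main obstacle---is the existence of sufficiently good proper formal models after alteration for qcqs rigid spaces over $K$: without a strictly semistable (or at least smooth) model in hand, the Rapoport--Zink machinery is unavailable. A secondary but nontrivial point is tracking not only $q$-slopes but also integrality and the unit-away-from-$p$ property through the weight spectral sequence, the decomposition theorem, and the excision sequences; this works because all three properties are preserved under subquotients, extensions, and direct summands, and hold at the $E_1$ level by Katz--Messing.
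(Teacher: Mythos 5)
Your proposal diverges substantially from the paper's approach and has several genuine gaps.

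First, the paper does not go through Rapoport--Zink weight spectral sequences or Katz--Messing at all. The proof is entirely via the formalism of \emph{integral complexes}: one fixes an admissible (not necessarily proper) formal model $\cX$ of $X$, uses $Ra_{\eta*}\simeq R(a_s\times_s\eta)_*R\Psi_\cX$ and its $!$-analogue, and reduces to showing that $R\Psi_\cX\Q_\ell$ and $R\Psi_\cX\IC_{X,\Q_\ell}[-d]$ are integral and $\id$-inverse integral (Theorem \ref{t.psi2}). That result is proved by algebraizing locally (Temkin's algebraizability of affine admissible formal schemes), invoking the known integrality of \emph{algebraic} nearby cycles, and then passing to $\IC$ via resolution of singularities together with the key new Section~\ref{s.2} result that integral complexes are stable under perverse truncations and subquotients. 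No strictly semistable model, and in particular no proper model, is ever required.

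Second, your plan for general qcqs $X$ does not work as stated. If $X$ is not proper, it does not admit a proper formal model, and there is no proper surjective $f\colon\tilde X\to X$ with $\tilde X$ smooth and proper over $K$ (properness of $f$ plus properness of $\tilde X$ would force $X$ proper). Compactification of a general qcqs rigid space over $K$ is not available the way it is for schemes, so the ``compactify and run excision'' step is unsupported. The paper instead handles $H^i_c$ for general $X$ by a different device: the $[0,i]$ bound comes directly from integrality of $R\Psi$, and the $[i-d,d]$ bound for $H^i_c$ is obtained by an induction on $\dim X$ (stratifying by the singular locus, using Huber's continuity of $H^*_c$ along Zariski open exhaustions, Lemma \ref{l.cont}) to reduce to the smooth case where duality applies. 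This induction works only for $H^i_c$ because of the direction of the excision triangle; it fails for $H^i$, which is precisely why the paper proves only $[0,i]$ for $H^i$ and states the stronger bound for $H^i$ as an open conjecture at the end of the introduction. Your explanation that the $[0,i]$ bound in (b) ``reflects the absence of Poincar\'e duality without properness'' understates the problem: the obstruction is real and not resolved by compactification arguments.

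Third, the decomposition theorem you invoke, exhibiting $\IC_{X,\Q_\ell}$ as a direct summand of $Rf_*\Q_\ell[d]$, is not available for rigid analytic spaces over $K$; it is a theorem for perverse sheaves over finite fields (and its horizontal variants over finitely generated fields). The paper circumvents this by working after applying $R\Psi_\cX$, where one lands in perverse sheaves on a scheme over $\F_q$ (in the fiber product topos $\cX_s\times_s\eta$), and by using the \emph{subquotient}---not direct summand---relation $\IC_{\cX_\eta,\Q_\ell}\hookrightarrow\!\!\!\twoheadleftarrow\ \pH^0Rf_{\eta*}\Q_\ell[d]$ combined with the perverse exactness of $R\Psi$ and the perverse-truncation stability of integral complexes (Corollary \ref{c.trunc}). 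That last stability result is the technical heart of the paper and does not appear in your proposal.

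What your approach would buy, if the geometric inputs existed, is a more concrete proof in the proper semistable case via weight spectral sequences. But the two main geometric ingredients you flag as obstacles (proper semistable models for qcqs rigid spaces, rigid-analytic decomposition theorem) are precisely the ones that do not exist in the required generality, and the asymmetry between $H^i$ and $H^i_c$ is a genuine phenomenon rather than a technicality to be absorbed by excision.
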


R.~Huber \cite{Huber} defined compactly supported cohomology of separated 
rigid spaces. In the nonseparated case one can define compactly supported 
cohomology using duality: 
\[H^i_c(X_C,\Q_\ell)=H^{-i}(X_C,D_{X}\Q_\ell)\spcheck,\quad \IH^i_c(X_C,\Q_\ell)=\IH^{-i}(X_C,D_{X}\IC_{X,\Q_\ell})\spcheck,\]
where $D_{X}(-)=R\cHom(-,\omega_{X})$ is Verdier duality \cite[Theorems 3.21, 
3.36]{BH}. 

In case (a), the $q$-slopes of the eigenvalues of $g$ lie in the shaded 
region of the following picture. 
\begin{center}
\begin{tikzpicture}
    \draw[->] (0,0) -- (2.5,0) node[right] {$i$};
    \draw (1,0.1) -- (1,0) node[below] {$d$};
    \draw (2,0.1) -- (2,0) node[below] {$2d$};
    \draw (0,0) node[below] {$0$};
    \draw[->] (0,0) -- (0,1.5) node[above] {$q$-slopes};
    \draw (0.1,1) -- (0,1) node[left] {$d$};
    \draw[fill=lightgray] (0,0) -- (1,0) -- (2,1) -- (1,1) -- cycle;
\end{tikzpicture}
\end{center}

\begin{remark}
\begin{enumerate}
\item That the eigenvalues of $g$ acting on $H^i_c(X_C,\Q_\ell)$ are 
    algebraic integers is a result of Mieda \cite[Theorem 1.1]{Mieda}.
\item In the case where $X$ is smooth, the results of Theorem \ref{t.main} 
    hold also over a local field of characteristic $p>0$. See Proposition 
    \ref{p.pos}. 
\item The analogue of Theorem \ref{t.main} for schemes of finite type over 
    a local field holds with $[0,i]\cap [i-d,d]$ as bounds for $q$-slopes 
    in all four cases. See Remark \ref{r.sch}. 
\end{enumerate}
\end{remark}

Hansen and Zavyalov showed that the eigenvalues are Weil numbers 
\cite[Theorem 1.2.5]{HZ}. Combining their theorem with ours, one gets the 
following. 

\begin{cor}\label{c.wt}
Let $X$ be a qcqs rigid space of dimension $d$ over $K$ and let $g\in G_K$ be 
an element projecting to $\Fr_{q}$. 
\begin{enumerate}
\item For all $i\in \Z$, the eigenvalues of $g$ acting on 
    $\IH^{i-d}(X_C,\Q_\ell)$, $\IH^{i-d}_c(X_C,\Q_\ell)$, and
    $H^i_c(X_C,\Q_\ell)$ are $q$-Weil numbers with weights contained in 
    $[0,2i]\cap [2(i-d),2d]$. 
\item For all $i\in \Z$, the eigenvalues of $g$ acting on 
    $H^i(X_C,\Q_\ell)$ are $q$-Weil numbers with weights contained in 
    $[0,2i]$. 
\end{enumerate} 
\end{cor}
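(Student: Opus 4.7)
The plan is to combine Theorem~\ref{t.main} with the Hansen--Zavyalov result \cite[Theorem 1.2.4]{HZ} that the eigenvalues in question are $q$-Weil numbers. Theorem~\ref{t.main} already supplies bounds on the $q$-slopes, so what remains is to translate slope bounds into weight bounds.

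The bridge, which I would record as a short standalone lemma on Weil numbers, is the following observation: for any $q$-Weil number $\alpha$ of weight $w$, the set of $q$-slopes of $\alpha$ is stable under the involution $s\mapsto w-s$. Indeed, for any complex embedding $\iota$ of $\Q(\alpha)$, one has $\iota(\alpha)\cdot\overline{\iota(\alpha)}=q^w$, so the element $q^w/\alpha$ lies in the Galois orbit of $\alpha$; for any $p$-adic embedding $j\colon \overline{\Q}\hookrightarrow \C_p$, the relation $v_q(j(q^w/\alpha))=w-v_q(j(\alpha))$ then yields the claimed symmetry. Granted this symmetry, if the $q$-slopes of $\alpha$ all lie in $[A,B]$ then the smallest and largest slopes $s_-,s_+$ of $\alpha$ satisfy $s_-+s_+=w$ (both $w-s_-$ and $w-s_+$ are themselves slopes of $\alpha$, so $w-s_+\ge s_-$ and $w-s_-\le s_+$), and in particular $w\in[2A,2B]$.

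Applying this to the slope bounds provided by Theorem~\ref{t.main} finishes the proof: in case~(a), $[A,B]=[0,i]\cap[i-d,d]$ gives $w\in[0,2i]\cap[2(i-d),2d]$; in case~(b), $[A,B]=[0,i]$ gives $w\in[0,2i]$. There is essentially no obstacle here once Theorem~\ref{t.main} is in hand; the only nontrivial step is the slope--weight symmetry above, which is a purely formal consequence of the definition of a Weil number.
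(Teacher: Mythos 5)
Your proof is correct and follows the same overall strategy as the paper's: combine Theorem~\ref{t.main} with the Weil-number result of Hansen--Zavyalov \cite[Theorem 1.2.4]{HZ}, bridged by a lemma translating slope bounds into weight bounds. The difference is in the proof of that bridge lemma. The paper's Lemma~\ref{l.wt} invokes the product formula, and for this adds the hypothesis that $\alpha$ is a unit away from $p$: the product formula then forces $\sum_j v_q(j(\alpha))=nw/2$ over the $p$-adic embeddings $j$ (with $n=[\Q(\alpha):\Q]$), so the \emph{average} slope is $w/2$ and the bound follows. Your argument instead establishes that the multiset of $q$-slopes of a $q$-Weil number of weight $w$ is \emph{symmetric} about $w/2$, deduced from the observation that $q^w/\alpha$ is a Galois conjugate of $\alpha$; that symmetry, applied to the extreme slopes $s_-$, $s_+$, gives $s_-+s_+=w$ and hence $w\in[2A,2B]$. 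Both routes are elementary and both give exactly what is needed; your variant is marginally more general in that it dispenses with the unit-away-from-$p$ hypothesis, though in the present application that hypothesis is supplied for free by Theorem~\ref{t.main}.
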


\begin{remark}
For $\IH^*$ the above corollary proves a conjecture of Bhatt and Hansen 
(\cite[Conjecture 4.15(2)]{BH}, \cite[Conjecture 5.1(2)]{HZ}). For $\IH_c^*$ 
the corollary proves a conjecture of Hansen and Zavyalov \cite[Conjecture 
5.1(1)]{HZ}. Some weaker bounds of weights were already proven in 
\cite[Theorem 3.3.4]{HZ}. The idea of using slope bounds to improve weight 
bounds originated from Deligne \cite[3.3.2]{WeilII}. 
\end{remark}

Recall that the action of the inertia group $I_K$ of $K$ is quasi-unipotent 
by Grothendieck's monodromy theorem \cite[Appendix]{ST}. Hansen and Zavyalov 
showed that the quasi-unipotency index can be bounded uniformly and, in the 
case of $H^i$, the index is bounded by $i$ \cite[Theorems 1.3.1, 1.3.2]{HZ}. 
Our slope bounds imply explicit bounds for the quasi-unipotency index in all 
four cases. 

\begin{cor}\label{c.qu}
Let $X$ be a qcqs rigid space of dimension $d$ over $K$. Then there exists an 
open subgroup $I_1\subseteq I_K$, independent of $\ell$, such that for all 
$g\in I_1$ and $i\in [0,2d]$, we have 
\begin{enumerate}
\item $(g-1)^{i+1}=0$ on $\IH^{i-d}_{(c)}(X_C,\Q_\ell)$ and 
    $H^i_{(c)}(X_C,\Q_\ell)$,
\item $(g-1)^{2d-i+1}=0$ on $\IH^{i-d}_{(c)}(X_C,\Q_\ell)$ and 
    $H^i_{c}(X_C,\Q_\ell)$.
\end{enumerate} 
\end{cor}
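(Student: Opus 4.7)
The plan is to combine the uniform (in $\ell$) quasi-unipotency of Hansen--Zavyalov \cite[Theorem 1.3.1]{HZ} with the weight bounds of Corollary \ref{c.wt}, exploiting the fact that the monodromy operator shifts Frobenius weights by $2$.

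I would begin by invoking \cite[Theorem 1.3.1]{HZ} to fix an open subgroup $I_1\subseteq I_K$, independent of $\ell$, such that $I_1$ acts unipotently on each of the four cohomology groups appearing in the statement; since wild inertia acts through a finite quotient on each $\Q_\ell$-space involved, I may further shrink $I_1$ to ensure that its wild part acts trivially on all of these groups. For $g\in I_1$ acting on such a representation $V$, I would then write the action as $\exp(N)$ for the unique nilpotent $N\in\End(V)$; the nilpotency indices of $g-1$ and $N$ coincide.

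The key step is the commutation relation $\phi N\phi^{-1}=qN$ for any lift $\phi\in G_K$ of $\Fr_q$, which I would derive from the tame-inertia relation $\phi g\phi^{-1}=g^q$ (valid on $V$ because wild inertia acts trivially there) together with uniqueness of the logarithm of a unipotent operator. This implies that $N$ sends the generalized $\phi$-eigenspace $V_\alpha$ into $V_{q\alpha}$; since $q\alpha$ has weight $w(\alpha)+2$ whenever $\alpha$ is a $q$-Weil number, iterating $N$ strictly raises the Frobenius weight. Consequently, if all Frobenius weights on $V$ lie in an interval $[a,b]$, then $N^k=0$ as soon as $2k>b-a$.

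Plugging in the weight ranges from Corollary \ref{c.wt} finishes the proof. For $V=H^i(X_C,\Q_\ell)$, weights lie in $[0,2i]$ and I obtain $N^{i+1}=0$, giving (a). For $V=H^i_c(X_C,\Q_\ell)$ or $V=\IH^{i-d}_{(c)}(X_C,\Q_\ell)$, weights lie in $[0,2i]\cap[2(i-d),2d]$, an interval of length $2\min(i,2d-i)$; the bound above then yields both $N^{i+1}=0$ and $N^{2d-i+1}=0$, which are (a) and (b) in these cases. I do not anticipate serious obstacles: the $\ell$-uniformity of $I_1$ is already the content of \cite[Theorem 1.3.1]{HZ}, and the rest is a routine application of the monodromy--Frobenius commutation, with the only mild technical point being the absorption of wild inertia into the choice of $I_1$.
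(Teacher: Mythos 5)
Your argument is correct, and it takes a route that is genuinely different in detail from the paper's, though it is driven by the same underlying idea (the monodromy operator shifts Frobenius eigenvalues by a power of $q$, so a gap bound on eigenvalues caps the nilpotency index). The paper's proof isolates this in Lemma~\ref{l.qu}: after invoking \cite[Theorem 3.3.1]{HZ} for a uniform $I_1$, it uses part~(b) of that lemma, namely that if $n$ is the quasi-unipotency index then $\rho(g)$ has eigenvalues $\alpha,\beta$ with $\beta=q^n\alpha$ (because $\gr^M_{-n}V\simeq\gr^M_n V(n)$), and then reads off $n$ from the \emph{$q$-slope} bounds of Theorem~\ref{t.main}: $v_q(\beta)=n+v_q(\alpha)$ with both slopes in $[a,b]$ forces $n\le b-a$. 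You instead establish the commutation $\phi N\phi^{-1}=qN$ explicitly, deduce $N\colon V_\alpha\to V_{q\alpha}$ on generalized Frobenius eigenspaces, and bound $N$ using the \emph{weight} bounds of Corollary~\ref{c.wt}. Both are valid, but note the difference in inputs: the paper's route needs only Theorem~\ref{t.main} (algebraicity and slope bounds), whereas yours passes through Corollary~\ref{c.wt}, which additionally invokes the Hansen--Zavyalov Weil-number theorem \cite[Theorem 1.2.4]{HZ} to make the notion of weight meaningful; your argument could be made equally self-contained by replacing $w(q\alpha)=w(\alpha)+2$ with $v_q(q\alpha)=v_q(\alpha)+1$ and citing Theorem~\ref{t.main} directly. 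One small point of hygiene: the step where you ``shrink $I_1$ so that its wild part acts trivially'' is automatic once $I_1$ acts unipotently (a pro-$p$ group mapping continuously into unipotent matrices over $\Q_\ell$ has trivial image), and what you actually use is the Serre--Tate form $\rho(h)=\exp(t_\ell(h)N)$ on an open subgroup with a single nilpotent $N$, from which $\phi N\phi^{-1}=qN$ follows by comparing with $t_\ell(\phi h\phi^{-1})=qt_\ell(h)$; phrasing it this way avoids the minor awkwardness that $\phi g\phi^{-1}g^{-q}$ lies in $P_K$ but not necessarily in $I_1$.
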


\begin{remark}
The case $H^i$ of part (a) was already proved in \cite[Theorem 1.3.2]{HZ} in 
a different way. The other three cases of part (a) prove the local field case 
of \cite[Conjecture 5.3]{HZ} with $\Q_\ell$-coefficients. In a forthcoming 
paper, we consider the more general case of $p$-adic complete discrete 
valuation fields. The analogue of part (a) for separated schemes of finite 
type over a complete discrete valuation field was proved by Gabber and 
Illusie \cite[Theorem 2.3, Remark 2.5]{IllTou}. 
\end{remark}

We deduce Theorem~\ref{t.main} from a result on the integrality and slopes of 
nearby cycles. Let $\cO_K$ be the ring of integers of $K$. By an 
\emph{admissible} formal $\cO_K$-scheme we mean a flat formal $\cO_K$-scheme 
topologically of finite type. Recall from \cite[Definition A.3.1]{HZ} that 
for any admissible formal $\cO_K$-scheme $\cX$, the nearby cycles live on the 
fiber product topos $\cX_s\times_s\eta$, where $s=\Spec(\F_q)$, 
$\eta=\Spec(K)$, and $\cX_s$ denotes the special fiber of $\cX$. 

\begin{theorem}\label{t.psi}
Let $\cX$ be an admissible formal $\cO_K$-scheme with generic fiber 
$\cX_\eta$ of dimension $d$. Then, for any $i\in \Z$, any geometric point 
$\bar x$ above $x$ of $\cX_s$, and any element $g$ of $G_x\times_{G_{\F_q}} 
G_K$ projecting to the geometric Frobenius $\Fr_x\in G_x$,  the eigenvalues 
of $g$ acting on $(R^i\Psi_{\cX}\Q_\ell)_{\bar x}$ and 
$(R^{i-d}\Psi_{\cX}\IC_{\cX_\eta,\Q_\ell})_{\bar x}$ are algebraic integers 
that are units away from $p$ and their $q_x$-slopes are contained in $[0,i]$. 
Here $q_x$ denotes the cardinality of the residue field of $x$. 
\end{theorem}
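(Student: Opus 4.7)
The plan is to reduce, via alterations, to the case where $\cX$ has strictly semistable reduction, in which setting nearby cycles admit an explicit description going back to Rapoport--Zink. The statement is étale-local around $x$ on $\cX_s$, and the conclusions ($q_x$-slopes in $[0,i]$, algebraic integrality, units away from $p$) are preserved under replacing $K$ by a finite extension $K'/K$ and $x$ by a geometric point of the pullback lying above it, since the Frobenius gets raised to a power and slopes scale trivially. So I would freely shrink $\cX$ and enlarge $K$.

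Applying a Temkin--de Jong-style altered uniformization for admissible formal $\cO_K$-schemes, one obtains a proper surjective alteration $f\colon \cX'\to \cX_{\cO_{K'}}$ of generic degree $n$ with $\cX'$ strictly semistable over $\cO_{K'}$. Proper base change for nearby cycles gives $Rf_{s*}\circ R\Psi_{\cX'} \cong R\Psi_{\cX_{\cO_{K'}}}\circ Rf_{\eta*}$, and $\Q_\ell$ is a direct summand of $Rf_{\eta*}\Q_\ell$ in the derived category (via a degree-$n$ trace map, invisible with $\Q_\ell$-coefficients). Hence integrality and slope bounds on stalks of $R^i\Psi_{\cX'}\Q_\ell$ imply the same on stalks of $R^i\Psi_\cX\Q_\ell$, once one unpacks the spectral sequence converging to $\bigl(R^{p+q}\Psi_\cX(Rf_{\eta*}\Q_\ell)\bigr)_{\bar x}$.

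For the strictly semistable case I would transport the Rapoport--Zink description of nearby cycles to the formal setting: $R^p\Psi_{\cX'}\Q_\ell \cong a^{(p+1)}_*\Q_\ell(-p)$, where $a^{(p+1)}$ is the canonical map from the disjoint union of $(p+1)$-fold intersections of irreducible components of $\cX'_s$ into $\cX'_s$. Each stalk is then a Tate twist $\Q_\ell(-p)$, on which Frobenius acts by $q'^p$, giving an algebraic integer of $q'$-slope exactly $p\in[0,p]$. This handles the $R^i\Psi_\cX\Q_\ell$ assertion. For $\IC_{\cX_\eta,\Q_\ell}$ I would apply a de Jong alteration $\pi\colon \cY\to \cX_\eta$ in the rigid setting with $\cY$ smooth of dimension $d$; the Bhatt--Hansen decomposition theorem \cite{BH} then realizes $\IC_{\cX_\eta,\Q_\ell}$ as a direct summand of $\pH^0(R\pi_*\Q_\ell[d])$. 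Extending $\pi$ to a morphism of admissible formal models (after a further semistable alteration) and commuting $R\Psi$ with the corresponding proper pushforward reduces the $R^{i-d}\Psi\IC$ statement to the already-established $R^i\Psi\Q_\ell$ case on $\cY$, the shift $d$ being absorbed by $\IC_{\cY,\Q_\ell}=\Q_\ell[d]$.

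The main obstacle is securing the required geometric input with the needed generality for admissible formal $\cO_K$-schemes: the existence of strictly semistable alterations over some $\cO_{K'}$, and a Rapoport--Zink-type description of $R\Psi\Q_\ell$ compatibly with the fiber-product topos $\cX_s\times_s\eta$ of \cite[Definition A.3.1]{HZ}. The secondary delicate point is matching the perverse decomposition of \cite{BH} with the $R\Psi$ machinery on a formal model, so that slope bounds can be propagated from $\Q_\ell$-coefficients to $\IC_{\cX_\eta,\Q_\ell}$ after accounting for the perverse shift.
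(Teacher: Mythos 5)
Your proposal for $R\Psi_{\cX}\Q_\ell$ is a plausible alternative to the paper's route, but it contains two gaps, one minor and one serious.

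\textbf{Minor issue.} The formula $R^p\Psi_{\cX'}\Q_\ell \cong a^{(p+1)}_*\Q_\ell(-p)$ is not correct as stated. For strictly semistable reduction the stalk of $R^p\Psi\Q_\ell$ at a geometric point lying on exactly $r$ components is $\bigwedge^p(\Q_\ell^{r-1})(-p)$, and $R^0\Psi\Q_\ell$ is the constant sheaf, not $a^{(1)}_*\Q_\ell$. Since all eigenvalues on stalks are still $q_x^p$ up to roots of unity, your slope conclusion survives, but the displayed identification does not. A more substantive worry for this branch is whether semistable alterations of admissible formal $\cO_K$-schemes, compatibly with the fiber-product topos $\cX_s\times_s\eta$, are actually available in the literature in the generality you need; the paper avoids this by algebraizing locally via Temkin \cite[Theorem 3.1.3]{Temkin17}, comparing formal and algebraic nearby cycles via \cite[Theorem A.4.4]{HZ}, and citing the already-established integrality of algebraic nearby cycles (Theorem \ref{t.psialg} from \cite{Zint}). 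For the non-smooth $\Q_\ell$ case the paper then globalizes via a rig-surjective hypercover by resolutions (Temkin \cite{Temkin12}, with $\chara K=0$) rather than a single alteration.

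\textbf{Serious gap: the $\IC$ case.} You invoke a ``Bhatt--Hansen decomposition theorem'' realizing $\IC_{\cX_\eta,\Q_\ell}$ as a \emph{direct summand} of $\pH^0(R\pi_*\Q_\ell[d])$. No such decomposition theorem exists in \cite{BH}, nor is one expected: decomposition theorems rely on weight/purity arguments (as in \cite{BBD}) or Hodge-module techniques that are not available for $\Q_\ell$-sheaves on rigid spaces over a $p$-adic field. What \emph{is} available from the general theory of perverse sheaves is that $\IC_{\cX_\eta,\Q_\ell}$ is a \emph{subquotient} of $\pH^0 Rf_{\eta*}(\Q_\ell[d])$ for a resolution $f\colon\cY\to\cX$. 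But then you need to know that integrality and slope bounds pass to perverse subquotients (and, after applying the perverse-exact $R\Psi_\cX$, again to a perverse subquotient of $\pH^0(R(f_s\times_s\eta)_*R\Psi_\cY\Q_\ell[d])$). This is precisely the paper's key new ingredient, established in Section \ref{s.2} (Theorem \ref{t.trunc} and Corollary \ref{c.trunc}): integral complexes are stable under perverse truncations and integral perverse sheaves are stable under subquotients. Without that result---or the nonexistent decomposition theorem---your argument for $R^{i-d}\Psi_{\cX}\IC_{\cX_\eta,\Q_\ell}$ does not close.
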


The proof of the theorem relies on the study of integral sheaves and 
complexes on schemes, which was initiated by Deligne \cite[XXI 5]{SGA7II} and 
developed in \cite{Zint}. One key new ingredient for the case of intersection 
complexes is the preservation of integral complexes under perverse 
truncations and perverse subquotients. The preservation is proved in Section 
\ref{s.2} in a general setting, following ideas from weight theory of 
perverse sheaves \cite[Section~5]{BBD}. The proofs of Theorems \ref{t.main} 
and \ref{t.psi} are given in Section \ref{s.3}.

We now turn to the monodromy weight property. For $Y$ a scheme of finite type 
over $\F_q$, recall from \cite[Definition 2.7.1]{HZ} that $L\in 
D^b_c(Y\times_s \eta,\Q_\ell)$ is said to be \emph{monodromy-pure} of weight 
$w$ if $\gr^M_j\pH^i L$ is pure of weight $w+i+j$ for all $i,j\in \Z$, where 
$M$ is the monodromy filtration. Hansen and Zavyalov \cite{HZ}*{Question 
1.4.9} asked the following question. 

\begin{question}\label{q.monodromy}
Let $f\colon Y\to Z$ be a projective morphism of schemes of finite type over 
$\F_q$. Does $R(f\times_s \eta)_*$ preserve monodromy-pure complexes of 
weight $w$? 
\end{question}

A positive answer to the question would imply both Deligne's monodromy weight 
conjecture for schemes and its variant for rigid spaces \cite[Conjecture 
1.4.7]{HZ}. We answer the question negatively by constructing  
counterexamples in Section~\ref{s.4}. 

\begin{theorem}\label{t.ex}
For every projective curve $Y$ over $\F_q$, there exists a monodromy-pure 
perverse sheaf $\cP\in \Perv_c(Y\times_s \eta,\Q_\ell)$ such that 
$R(a\times_s \eta)_*\cP$ is not monodromy-pure. Here $a\colon Y\to s$ is the 
structure morphism. 
\end{theorem}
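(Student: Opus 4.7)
The plan is to realise a mixed $G_K$-representation that is not monodromy-pure as the cohomology of a carefully constructed monodromy-pure perverse sheaf $\cP$ on $Y\times_s\eta$.  The construction is a non-split perverse extension whose long exact sequence in cohomology produces an $H^{-1}$ of weight $0$ and an $H^{1}$ that is monodromy-pure of weight $3$, ruling out any monodromy-pure weight $w$ for $R(a\times_s\eta)_{*}\cP$.

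I take $V=H^{1}(E_{\bar K},\Q_\ell)$ for an elliptic curve $E/K$ with split multiplicative reduction: this is monodromy-pure of weight $1$, fits in a nonsplit sequence $0\to\Q_\ell\to V\to\Q_\ell(-1)\to 0$ of $G_K$-modules, and $\Hom_{G_K}(V,\Q_\ell(-1))$ is one-dimensional, spanned by the canonical projection $\xi$.  Given $Y$, pick a closed point $y_0\in Y$; after replacing $\F_q$ by a finite extension if necessary we may assume $y_0$ is $\F_q$-rational, which is harmless for the conclusion.  Let $i\colon\{y_0\}\hookrightarrow Y$.  By the adjunction between $i^{*}$ and $i_{*}$,
\[
\Ext^{1}_{\Perv_c(Y\times_s\eta,\Q_\ell)}\!\bigl(V_Y[1],\,i_{*}\Q_\ell(-1)\bigr)\;\cong\;\Hom_{G_K}(V,\Q_\ell(-1)),
\]
where $V_Y[1]$ is the shift of the constant sheaf on $Y$ with fibre $V$.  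Define $\cP$ to be the non-trivial extension corresponding to $\xi$.  For singular $Y$, replace $V_Y[1]$ by $\pi_{*}V_{\tilde Y}[1]$ for the normalisation $\pi\colon\tilde Y\to Y$.

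Next I verify that $\cP$ is monodromy-pure of weight $2$.  Its composition factors are $\Q_{\ell,Y}[1]$ (pure of weight $1$), $i_{*}\Q_\ell(-1)$ (pure of weight $2$), and $\Q_{\ell,Y}(-1)[1]$ (pure of weight $3$).  The nilpotent $N_{\cP}$ vanishes on the skyscraper sub and restricts to $N_V$ on the quotient $V_Y[1]$, so that $N_{\cP}^{2}=0$.  Tracing the monodromy filtration shows that $\Q_{\ell,Y}[1]=\gr^{M}_{-1}\cP$, $i_{*}\Q_\ell(-1)=\gr^{M}_{0}\cP$, and $\Q_{\ell,Y}(-1)[1]=\gr^{M}_{1}\cP$, with $N_{\cP}$ inducing the required isomorphism $\gr^{M}_{1}\cP\simto\gr^{M}_{-1}\cP(-1)$ inherited from the analogous isomorphism in $V$.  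To verify failure of monodromy-purity for $R(a\times_s\eta)_{*}\cP$, the long exact sequence of the defining extension has connecting map $\delta^{-1}\colon H^{-1}(Y_{\bar s},V_Y[1])=V\to H^{0}(Y_{\bar s},i_{*}\Q_\ell(-1))=\Q_\ell(-1)$ equal to $\xi$, so that $H^{-1}(Y_{\bar s},\cP)=\ker\xi=\Q_\ell$ is pure of weight $0$, whereas $H^{1}(Y_{\bar s},\cP)=H^{2}(Y_{\bar s},V_Y[1])=V(-1)$ is monodromy-pure of weight $3$.  Monodromy-purity of any weight $w$ would simultaneously force $w=1$ (from $H^{-1}$) and $w=2$ (from $H^{1}$), a contradiction.

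The main obstacle is the verification that $\cP$ itself is monodromy-pure: one must analyse how the nonsplit extension class $\xi$ determines the $I_K$-action on $\cP$ and then check that the monodromy filtration of the resulting perverse morphism $N_{\cP}\colon\cP\to\cP(-1)$ has pure graded pieces in the claimed degrees.  Once this is established, the weight mismatch in cohomology follows directly from the long exact sequence computation.
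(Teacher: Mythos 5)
Your proof is correct, but it takes a genuinely different route from the paper's. The paper reduces to a smooth connected curve with \emph{two} rational points $P_1,P_2$, builds on $Y$ (over $\F_q$) a perverse sheaf $\cG$ using the cycle class of $\{P_1,P_2\}$, and then lifts it to $Y\times_s\eta$ by \emph{artificially inserting} a monodromy operator $N$ with opposite signs $(-1,+1)$ at the two points via the twisting gadget of Construction~\ref{c.twist}. The punchline there is that the opposite signs make $N$ vanish on the $E_2$-page of the weight spectral sequence, so that $R^0(a\times_s\eta)_*\cP$ acquires incompatible sub and quotient of different weights while having zero monodromy. Your construction needs only \emph{one} rational point, dispenses with the cycle class and with Construction~\ref{c.twist} entirely, and instead imports the monodromy-pure $G_K$-module $V=H^1(E_{\bar K},\Q_\ell)$ of a Tate elliptic curve; the perverse sheaf $\cP$ is the canonical nonsplit extension of the constant sheaf $V_Y[1]$ by $i_*\Q_\ell(-1)$ along the projection $\xi\colon V\to\Q_\ell(-1)$, and the monodromy is then the \emph{natural} one inherited from $V$. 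The verification that $\cP$ is monodromy-pure, which you rightly flag as the key point, goes through: since $\xi$ restricted to the weight-zero line of $V$ vanishes and $\Hom(\Q_{\ell,Y}[1],i_*\Q_\ell(-1))=0$, the preimage $\cP_1$ of $\Q_{\ell,Y}[1]$ splits canonically, the image of $N_\cP$ is the canonical copy of $\Q_{\ell,Y}[1]$ inside it, and the filtration $\Q_{\ell,Y}[1]\subset\cP_1\subset\cP$ is exactly the monodromy filtration, with graded pieces of weights $1,2,3$. Finally, your contradiction is drawn from cohomology in two different degrees — $H^{-1}=\Q_\ell$ (forcing $w=1$) versus $H^1=V(-1)$ (forcing $w=2$) — whereas the paper's contradiction lives entirely within $R^0$. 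Both arguments are valid; yours is arguably more economical and does not require introducing the twisting construction, but it does rely on the existence of a Tate elliptic curve over $K$ as external input. (The only small omission is the justification, as in the paper, that the initial reduction via finite pushforward from the normalisation and a finite base extension preserves the conclusion — this uses that finite pushforward preserves monodromy-pure perverse sheaves, cf.\ \cite[Lemma 2.7.5]{HZ}.)
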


We end this introduction with a speculation. The bounds for slopes, weights, 
and the quasi-unipotency index we gave for $\IH_{(c)}^{i-d}(X_C,\Q_\ell)$ and 
$H_c^i(X_C,\Q_\ell)$ are optimal, as shown by the example of the rigid 
analytification of products of elliptic curves with multiplicative reduction. 
The same example shows that the bounds we gave for $H^i(X_C,\Q_\ell)$ are 
also optimal for $i\le d$. It is natural to expect that $H^i(X_C,\Q_\ell)$ 
admits the same bounds as $H^i_c(X_C,\Q_\ell)$ for every~$i$. 

\begin{conj}
Let $X$ be a qcqs rigid space of dimension $d$ over $K$ and let $g\in G_K$ be 
an element projecting to $\Fr_{q}$. For all $i>d$, the $q$-slopes of the 
eigenvalues of $g$ acting on $H^i(X_C,\Q_\ell)$ are contained in $[i-d,d]$. 
\end{conj}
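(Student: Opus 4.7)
The plan is to reduce the conjecture to the smooth case, where it follows from Theorem~\ref{t.main}(a), and then propagate the bound to singular $X$ by induction on dimension using abstract blow-up squares.

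When $X$ is smooth of pure dimension $d$, one has $\IC_{X,\Q_\ell} = \Q_\ell[d]$, hence $\IH^{i-d}(X_C,\Q_\ell) = H^i(X_C,\Q_\ell)$, and Theorem~\ref{t.main}(a) yields $q$-slopes in $[0,i]\cap[i-d,d]=[i-d,d]$ for $i>d$. Equivalently, Poincar\'e duality applied to the bound for $H^{2d-i}_c(X_C,\Q_\ell)$ in Theorem~\ref{t.main}(a) gives the same conclusion.

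For a general qcqs rigid $X$ of dimension $d$, I would induct on $d$, the case $d\le 0$ being vacuous. Choose a resolution $\pi\colon \tilde X\to X$, which exists for qcqs rigid spaces over $K$ by Temkin's desingularization of quasi-excellent schemes in characteristic zero. Let $Z\subseteq X$ be the non-isomorphism locus of $\pi$ and $\tilde Z=\pi^{-1}(Z)$; then $\tilde X$ is smooth of dimension $d$ while $\dim Z,\dim \tilde Z\le d-1$. Proper base change for $\ell$-adic cohomology of rigid spaces supplies a $G_K$-equivariant abstract blow-up distinguished triangle on $X$, whence a $G_K$-equivariant long exact Mayer--Vietoris sequence
\[\cdots \to H^{i-1}(\tilde Z_C,\Q_\ell)\to H^i(X_C,\Q_\ell)\to H^i(\tilde X_C,\Q_\ell)\oplus H^i(Z_C,\Q_\ell)\to H^i(\tilde Z_C,\Q_\ell)\to\cdots.\]
Fix $i>d$. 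The smooth case bounds the $q$-slopes on $H^i(\tilde X_C,\Q_\ell)$ by $[i-d,d]$. The induction hypothesis, applied to $Z$ and $\tilde Z$ (of dimension $\le d-1$, with $i$ and $i-1$ both $>d-1$), bounds the $q$-slopes on $H^i(Z_C,\Q_\ell)$ and $H^{i-1}(\tilde Z_C,\Q_\ell)$ by $[i-d+1,d-1]$ and $[i-d,d-1]$ respectively, both contained in $[i-d,d]$. The Mayer--Vietoris sequence then forces the $q$-slopes of $g$ on $H^i(X_C,\Q_\ell)$ into $[i-d,d]$, completing the induction.

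The main obstacle is verifying the foundational inputs in the rigid-analytic setting: resolution of singularities for qcqs rigid spaces over $K$ (to be extracted from Temkin's work on quasi-excellent schemes, noting that rigid-analytic local rings are excellent) and the abstract blow-up distinguished triangle with $\Q_\ell$-coefficients for rigid spaces (a routine consequence of proper base change as in the scheme case). Both are standard in the scheme setting but require explicit verification in the rigid-analytic one. As a weaker substitute, de Jong alterations suffice for cohomological descent but do not give the dimension drop $\dim\tilde X_n\le d-n$ that a cubical hyperresolution approach would demand, so the inductive abstract blow-up argument above seems the cleanest route.
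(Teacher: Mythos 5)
The statement you are addressing is stated as a \emph{Conjecture} in the paper, not a theorem: the authors explicitly leave it open. There is therefore no ``paper's own proof'' to compare against; the paper's Theorem~\ref{t.main}(b) only establishes the weaker bound $[0,i]$ for $H^i$, and Remark after the conjecture explains the authors' reason for expecting the stronger bound. So the real question is whether your argument actually closes the gap.

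Your strategy is a genuinely different route from anything in the paper. For $H^i_c$ the paper stratifies $X$ by an open $V$ and its closed complement, using the exact sequence $H^i_c(V)\to H^i_c(X)\to H^i_c(Y)$ together with the continuity Lemma~\ref{l.cont}; this fails for $H^i$ because ordinary cohomology of a (non-quasi-compact) Zariski-open is not a filtered colimit of $H^i$ of quasi-compact opens. Replacing the open-closed excision by the abstract blow-up (cdh) square
\[
\begin{CD}
\Q_\ell @>>> R\pi_*\Q_\ell\\
@VVV @VVV\\
i_*\Q_\ell @>>> i_*R\pi'_*\Q_\ell
\end{CD}
\]
neatly sidesteps this: all four spaces $X,\tilde X, Z,\tilde Z$ are qcqs, so one only ever needs $H^i$ of qcqs rigid spaces. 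The dimension bookkeeping in your induction is correct for $i>d$ (the ranges $[i-d',d']$ with $d'\le d-1$ do sit inside $[i-d,d]$), and the smooth case is indeed Proposition~\ref{p.pos}, equivalently the $\IH$ case of Theorem~\ref{t.main}(a). The homotopy cartesianness of the square reduces, exactly as you indicate, to proper base change along $j$ and along $i$ for a proper $\pi$, plus the localization triangle for the Zariski closed/open pair $(Z,U)$; both are available for Zariski-constructible $\Q_\ell$-complexes in the Bhatt--Hansen formalism. Resolution of singularities for reduced qcqs rigid spaces over a $p$-adic $K$ is the input the paper itself uses (via Temkin) in the proof of Theorem~\ref{t.psi2}, and it does produce a proper $\pi$ which is an isomorphism over the Zariski-open regular locus, with $\tilde X$ smooth of dimension $\le d$ and $\tilde Z$ nowhere dense of dimension $\le d-1$. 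I do not see a gap in the argument; but since the experts who wrote the paper chose to state this as a conjecture, you should scrutinize the two foundational inputs you flagged with particular care (in particular, that the resolution really is an isomorphism over a Zariski-open with nowhere dense complement, and that the cdh square is cartesian after base change to $C$ in a $G_K$-equivariant way) before claiming a proof. If those check out, you have proved something the paper leaves open.
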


\subsection*{Conventions} By a \emph{local field} we mean a complete discrete 
valuation field with finite residue field. For a field $k$, we denote by 
$\bar k$ its algebraic closure and by $G_k=\Gal(\bar k/k)$ its Galois group. 

\subsection*{Acknowledgments} We learned much about the cohomology of rigid 
spaces from the work of Bhatt, Hansen, and Zavyalov (\cite{BH}, \cite{HZ}). 
We thank Shizhang Li and Xiangdong Wu for useful conversations and comments 
on drafts of this paper. We thank the referee for many helpful suggestions.

This work was partially supported by National Key Research and Development 
Program of China (grant number 2020YFA0712600), National Natural Science 
Foundation of China (grant numbers 12125107, 12271037, 12288201), Chinese 
Academy of Sciences Project for Young Scientists in Basic Research (grant 
number YSBR-033). 

\section{Perverse truncations and usual truncations}\label{s.2}

In this section, we show that integral complexes on schemes are stable under 
perverse truncations and integral perverse sheaves are stable under 
subquotients. The proof is inspired from weight theory of perverse sheaves 
\cite[Section~5]{BBD}. To showcase the similarities between the two scenarios 
and for other applications, we work in a general setting. 

Let $\cD$ be a triangulated category. Recall that a full subcategory 
$\cJ\subseteq \cD$ is said to be \emph{stable under extensions} if for every 
distinguished triangle 
\[L'\to L\to L''\to L'[1]\] 
with $L',L''\in \cJ$, we have $L\in \cJ$. We are interested in full 
subcategories $\cJ\subseteq \cD$  stable under extensions and satisfying 
$\cJ\subseteq \cJ[1]$. In this case, if $L,L''\in \cJ$ in the above 
distinguished triangle, then $L'\in \cJ$. 

Given $\cJ$ as above and a t-structure on $\cD$ with heart $\cA$, we put 
$\cA_\cJ^{[i]}=\cA\cap (\cJ[i])$ for all $i\in \Z$. By assumption 
$\cA_\cJ^{[i]}\subseteq \cA_\cJ^{[i+1]}$. Thus we get an ascending chain 
$(\cA_\cJ^{[i]})_{i\in \Z}$ of full subcategories of $\cA$. If $\cJ$ is 
stable under truncations and the t-structure is bounded, then $L\in \cD$ 
belongs to $\cJ$ if and only if $H^iL\in \cA_\cJ^{[i]}$ for all $i\in \Z$. 
Recall that a full subcategory of an Abelian category stable under 
subquotients and extensions is called a \emph{Serre subcategory}. 

\begin{defn}
We say that $\cJ$ is \emph{compatible} with the t-structure on $\cD$ if $\cJ$ 
is stable under truncations and $\cA_\cJ^{[i]}$ is a Serre subcategory of 
$\cA$ for all $i\in \Z$. 
\end{defn}

Conversely, given an ascending chain $\cI=(\cI^i)_{i\in \Z}$ of full 
subcategories of $\cA$, we let $\cD_\cI$ denote the full subcategory 
consisting of objects $L$ of $\cD$ such that $H^iL\in \cI^i$ for all $i\in 
\Z$. Then $\cD_\cI\subseteq \cD_\cI[1]$. If $(\cI^i)_{i\in \Z}$ is an 
ascending chain of Serre subcategories of $\cA$, then $\cD_\cI$ is stable 
under extensions and compatible with the t-structure on $\cD$. 

The following lemma is straightforward.

\begin{lemma}
Let $\cD$ be a triangulated category equipped with a bounded t-structure of 
heart~$\cA$. The constructions $\cI\mapsto \cD_\cI$ and $\cJ\mapsto 
(\cA_\cJ^{[i]})_{i\in \Z}$ define bijections, inverse to each other, between 
the class of ascending chains of Serre subcategories of $\cA$ and the class 
of full subcategories $\cJ\subseteq \cD$ satisfying $\cJ\subseteq \cJ[1]$, 
stable under extensions and compatible with the t-structure. 
\end{lemma}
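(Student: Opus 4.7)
The plan is to verify that the two constructions $\cI\mapsto \cD_\cI$ and $\cJ\mapsto (\cA_\cJ^{[i]})_{i\in \Z}$ are mutually inverse; the paragraphs preceding the lemma already show that each construction lands in the stated class, so it suffices to check the two compositions, and both reduce to short chases through the definitions.

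For the composition $\cI\mapsto \cD_\cI\mapsto (\cA_{\cD_\cI}^{[i]})$, by definition $\cA_{\cD_\cI}^{[i]}=\cA\cap (\cD_\cI[i])$. An object $L\in \cA$ lies in $\cD_\cI[i]$ precisely when $L[-i]\in \cD_\cI$, i.e., $H^j(L[-i])\in \cI^j$ for every $j$. Since $L$ lies in the heart, $H^j(L[-i])=H^{j-i}(L)$ vanishes for $j\neq i$ and equals $L$ for $j=i$; the condition collapses to $L\in \cI^i$ (the vacuous conditions $0\in \cI^j$ being satisfied because Serre subcategories contain the zero object), so $\cA_{\cD_\cI}^{[i]}=\cI^i$ and the chain is recovered.

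For the composition $\cJ\mapsto (\cA_\cJ^{[i]})\mapsto \cD_{(\cA_\cJ^{[i]})}$, the target is by definition $\{L\in \cD : H^iL\in \cA_\cJ^{[i]}\text{ for every }i\}$, which equals $\cJ$ by the equivalence already stated in the paragraphs preceding the lemma (using that compatibility of $\cJ$ includes stability under truncations, and that the t-structure is bounded). For completeness I would sketch why the equivalence holds: the forward implication uses that $\tau_{\leq i}\tau_{\geq i}L\simeq (H^iL)[-i]$ lies in $\cJ$ by stability under truncations, whence $H^iL\in \cA\cap \cJ[i]=\cA_\cJ^{[i]}$; the converse inducts on the length of the cohomological support of $L$ via the distinguished triangles
\[\tau_{\leq i-1}L\to \tau_{\leq i}L\to (H^iL)[-i]\to \tau_{\leq i-1}L[1],\]
using stability of $\cJ$ under extensions and the base case $L\simeq (H^aL)[-a]\in \cJ$ when $L$ is cohomologically concentrated in a single degree $a$.

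The entire proof is bookkeeping. The only mild obstacle is keeping the shift conventions relating the subcategory notation $\cJ[i]$, the truncation functors $\tau_{\leq i},\tau_{\geq i}$, and the cohomology functor $H^i$ consistent; no new ideas are needed beyond the definitions, consistent with the author's ``straightforward'' billing.
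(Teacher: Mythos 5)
Your proof is correct and follows the direct verification that the authors presumably had in mind when calling the lemma straightforward; the paper itself gives no proof, and the two composition-checks you carry out are exactly the content needed, with the key observations (that $H^{j}(L[-i])$ concentrates in degree $j=i$ for $L$ in the heart, and that the characterization of $\cJ$ via $H^iL\in\cA_\cJ^{[i]}$ is already recorded in the preceding paragraph of the paper) both accurately deployed.
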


\begin{theorem}\label{t.trunc}
Let $k$ be a field and let $X$ be a scheme of finite type over $k$. Let 
$\ell$ be a prime number invertible in $k$. For each subscheme $V\subseteq 
X$, suppose we are given a full subcategory $\cJ(V)\subseteq D^b_c(V,\Qlb)$ 
satisfying $\cJ(V)\subseteq \cJ(V)[1]$, stable under extensions and 
compatible with the usual t-structure. Assume that for every immersion 
$u\colon U\to V$ of subschemes of $X$, $u^*$ preserves $\cJ$. We put 
$\Perv^{[i]}_\cJ(V)=\Perv_c(V,\Qlb)\cap (\cJ(V)[i])$. Consider the following 
conditions. 
\begin{enumerate}[(A)]
\item $Rj_*$ preserves $\cJ$ for every immersion $j\colon V\to X$.
\item $j_{!*}$ preserves $\Perv_\cJ^{[i]}$ for every immersion $j\colon  
    V\to X$ and every $i\in \Z$. 
\item  $j_!$ preserves $\cJ$ for every immersion $j\colon V\to X$.
\item $\Perv_\cJ^{[i]}(V)=\Perv_\cJ^{[i]}(V)'$ for every subscheme $V$ of 
    $X$. Here $\Perv_\cJ^{[i]}(V)'\subseteq \Perv_c(V,\Qlb)$ denotes the 
    full subcategory consisting of perverse sheaves $\cP$ such that for 
    every irreducible subscheme $U$ of $V$, there exists a dense open 
    subscheme $W$ of $U$ such that $\cH^{-d} \cP|_W\in \cJ(W)[i-d]$, where 
    $d=\dim(U)$. 
\item $\cJ(V)\subseteq D^b_c(V,\Qlb)$ is compatible with the perverse 
    t-structure for every subscheme $V$ of $X$. 
\item $\Perv_\cJ^{[i]}(V)\subseteq \Perv_c(V,\Qlb)$ is a Serre subcategory 
    for every subscheme $V$ of $X$ and every $i\in \Z$. 
\end{enumerate}
We have
\[\text{(A)} \implies \text{(B)} \implies \text{(D)} \implies \text{(E)} \implies \text{(F)}.\]
Moreover, (B)$\iff$(C)+(F).
\end{theorem}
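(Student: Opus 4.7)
The plan is to establish the linear chain $(A) \Rightarrow (B) \Rightarrow (D) \Rightarrow (E) \Rightarrow (F)$ one step at a time, and then derive the equivalence $(B) \iff (C)+(F)$ separately. Throughout, closed immersions are auxiliary: since $i_{!*} = i_*$ is perverse t-exact and $Ri_* = i_*$, the closed case of each implication is routine, and all the nontrivial content concerns open immersions.

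For $(A) \Rightarrow (B)$, factor $j \colon U \to V$ through its closure in $V$ to reduce to the case where $j$ is an open immersion. Then run a Noetherian induction on $\dim(V \setminus U)$, using the transitivity $j_{!*} = j_{V',V,!*}\circ j_{U,V',!*}$ of intermediate extensions along composable open immersions: pick a dense smooth open $W \subseteq Z := V\setminus U$ and factor through $V' = U\cup W$, whose outer complement $Z\setminus W$ has strictly smaller dimension so the inductive hypothesis applies. This reduces to the case where $Z$ is smooth of some dimension $z$; there the identification ${}^p D^{\le -1}(Z)=D^{\le -z-1}(Z)$ together with the characterization of $j_{!*}\cP$ by its $i^*$ and $i^!$ conditions yields the formula
\[
j_{!*}\cP \;=\; \operatorname{fib}\bigl(Rj_*\cP \to i_*\tau_{\ge -z}(i^*Rj_*\cP)\bigr),
\]
where the map composes $Rj_*\cP \to i_*i^*Rj_*\cP$ with $i_*$ applied to the truncation. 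Chasing stabilities: $Rj_*\cP \in \cJ[i]$ by $(A)$; $i^*$ preserves $\cJ$ by the standing hypothesis; $\tau_{\ge -z}$ preserves $\cJ[i]$ by compatibility with the usual t-structure; $i_* = Ri_*$ preserves $\cJ$ by $(A)$; and stability under extensions yields $j_{!*}\cP \in \cJ[i]$.

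For $(B)\Rightarrow(D)$, the inclusion $\Perv_\cJ^{[i]}(V)\subseteq\Perv_\cJ^{[i]}(V)'$ is immediate from pullback preservation. For the reverse, given $\cP \in \Perv_\cJ^{[i]}(V)'$, each simple subquotient of $\cP$ in $\Perv_c(V,\Qlb)$ equals $j_{!*}(\cL[d])$ for an irreducible local system $\cL$ on a smooth locally closed $j\colon W \to V$ of dimension $d$; shrinking $W$ inside a witness of $(D)'$ for $U = \overline W$, the local system $\cL$ appears as a subquotient of $\cH^{-d}\cP|_W \in \cA_{\cJ(W)}^{[i-d]}$, so the Serre property of $\cA_{\cJ(W)}^{[i-d]}$ forces $\cL[d] \in \Perv_\cJ^{[i]}(W)$, whence $(B)$ gives $j_{!*}(\cL[d]) \in \Perv_\cJ^{[i]}(V)$. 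Iterating extensions along the composition series places $\cP$ in $\Perv_\cJ^{[i]}(V)$. For $(D)\Rightarrow(E)$: the Serre property of $\Perv_\cJ^{[i]}(V)$ follows by restricting a perverse short exact sequence to a small smooth dense open of the top-dimensional support, where each term becomes a local system placed in a single degree; stability under perverse truncations follows because ${}^p\cH^k L|_W = \cH^{k-d}(L|_W)[d]$ on smooth strata lets $(D)$ detect ${}^p\cH^k L \in \Perv_\cJ^{[k]}(V)$ whenever $L\in\cJ$, and the iterated-extension description of ${}^p\tau_{\le n}L$ from the ${}^p\cH^k L[-k] \in \cJ$ puts ${}^p\tau_{\le n}L$ in $\cJ$. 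The implication $(E)\Rightarrow(F)$ is immediate.

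It remains to prove $(B) \iff (C)+(F)$. Since $(B)\Rightarrow (F)$ follows from the chain above, for $(B)\Rightarrow (C)$ the key input is the canonical triangle
\[
j_!\cP \to j_{!*}\cP \to i_*i^*j_{!*}\cP \to [1],
\]
whose third term is identified by restricting to both strata: $(B)$ puts $j_{!*}\cP$ in $\cJ[i]$, pullback preservation puts $i^*j_{!*}\cP$ in $\cJ[i]$, and $i_*$ preserves $\cJ$ because $(B)$ combined with perverse compatibility from $(E)$ and perverse t-exactness of $i_*$ shows that the perverse cohomology sheaves of $i_*L$ lie in the right Serre subcategories; extension stability then gives $j_!\cP \in \cJ[i]$ for perverse $\cP$, and a Postnikov-tower argument via $(E)$ extends this to all $L\in\cJ(U)$. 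The converse $(C)+(F)\Rightarrow(B)$ is the main obstacle: by $(C)$ we have $j_!\cP \in \cJ[i]$ and $j_{!*}\cP$ is a perverse quotient of ${}^p\cH^0 j_!\cP$, so by $(F)$ it suffices to show ${}^p\cH^0 j_!\cP \in \Perv_\cJ^{[i]}$. I would attack this by Noetherian induction on $\dim(V\setminus U)$ dual to the argument for $(A)\Rightarrow(B)$, expressing $j_{!*}\cP$ as a cofiber of an appropriate truncation of $i^! j_!\cP$ on smooth boundary strata, and repeatedly using $(F)$ to pin down the perverse subquotients supported on $Z$ that arise in the perverse truncation of $j_!\cP$.
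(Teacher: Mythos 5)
Your chain $(A)\Rightarrow(B)\Rightarrow(D)\Rightarrow(E)\Rightarrow(F)$ is structured like the paper's, but two of the steps are materially incomplete, and the direction $(C)+(F)\Rightarrow(B)$ is only sketched.

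The most serious gap is $(C)+(F)\Rightarrow(B)$, which you yourself flag as unfinished. You reduce to showing $\pH^0 j_!\cP\in\Perv_\cJ^{[i]}$, but to exploit this you need to know that $j_!\cP$ is already well-controlled perverse-theoretically, and you propose a Noetherian induction ``dual'' to $(A)\Rightarrow(B)$ without carrying it out. The paper's route is shorter and avoids that induction entirely: it first shows $(C)+(F)\Rightarrow(D)$ by restricting attention to \emph{affine} immersions $u\colon U\to V$, for which $u_!$ is perverse t-exact, so $u_!\cP$ is itself a perverse sheaf in $\Perv_\cJ^{[i]}(V)$ by $(C)$ and its quotient $u_{!*}\cP$ lies there by $(F)$; this establishes a weak form of $(B)$ (for affine $U$, simple $\cP$), which already implies $(D)$ by the same length induction used for $(B)\Rightarrow(D)$, and then Zariski descent removes the separatedness restriction. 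Having $(D)$ and hence $(E)$, the full $(B)$ follows at once because $j_{!*}\cP$ is a quotient of $\pH^0 j_!\cP$ and $(E)$ gives perverse compatibility. You are missing the observation that affineness of the immersion turns $j_!\cP$ into a genuine perverse sheaf; that is the lever that makes $(F)$ usable.

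Your $(B)\Rightarrow(D)$ also has a gap. You reduce to showing that each simple \emph{subquotient} $u_{!*}(\cL[d])$ of $\cP\in\Perv_\cJ^{[i]}(V)'$ has $\cL$ appearing as a subquotient of $\cH^{-d}\cP|_W$ over some dense open $W$ and invoke the Serre property of $\cA^{[i-d]}_{\cJ(W)}$. But for an arbitrary subquotient this is not a formal consequence: the relevant long exact sequences produce cohomology sheaves of the sub and quotient whose supports need to be controlled, and there is no general reason the local system of an arbitrary simple subquotient is a subquotient of $\cH^{-d}\cP|_W$ after shrinking. The paper avoids this by proving first that $\Perv_\cJ^{[i]}(V)'$ is stable under \emph{quotients} (a short support-dimension argument, Lemma~\ref{l.D}) and then doing induction on the length of $\cP$ taking only a simple \emph{quotient} $\cP''$; the kernel is then shown to remain in $\Perv_\cJ^{[i]}(V)'$ by chasing the long exact sequence of usual cohomology sheaves. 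Restricting to simple quotients rather than simple subquotients is what makes the dimension bookkeeping go through. Similarly your $(D)\Rightarrow(E)$ is too loose: the identity $\pH^kL|_W=\cH^{k-d}(L|_W)[d]$ holds only after shrinking to ensure lisse cohomology, and showing $\pH^kL\in\Perv_\cJ^{[k]}(V)'$ needs the descending induction of \cite[Th\'eor\`eme 5.4.1]{BBD}, where one uses the already-established $\ptau^{\le i}L\in\cJ$ to control the cohomology sheaves of $\pH^iL$; simply asserting ``the iterated-extension description puts $\ptau^{\le n}L$ in $\cJ$'' conflates what is to be proved with what you have.

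Your $(A)\Rightarrow(B)$ is essentially the paper's argument and is correct modulo the point that in the base case you must shrink the smooth boundary stratum further so that $i^*Rj_*\cP$ has lisse cohomology sheaves, which is what justifies identifying the perverse truncation with the shifted usual truncation in your fiber formula (the paper formulates the same reduction as an iterated $\tau^{\le \bullet}Rj_{n*}$ formula over a stratification adapted to lissity). Your $(B)\Rightarrow(C)$ via the gluing triangle and $(E)\Rightarrow(F)$ are fine.
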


\begin{remark}
Condition (A) implies that $Ru_*$ preserves $\cJ$ for any immersion 
    $u\colon U\to V$ of subschemes of $X$. Indeed, $Ru_*\simeq j^*R(ju)_*$. 
    The same remark can be made for conditions (B) and (C). 
\end{remark}

\begin{lemma}\label{l.D}
$\Perv_\cJ^{[i]}(V)'\subseteq \Perv_c(V,\Qlb)$ is stable under quotients.
\end{lemma}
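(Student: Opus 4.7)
My plan is to prove stability under quotients by reducing to the Serre-subcategory property of $\cJ(W)$ in a single degree, after choosing an appropriate generic open $W$. Let $\cP \twoheadrightarrow \cQ$ be a surjection in $\Perv_c(V,\Qlb)$ with $\cP \in \Perv_\cJ^{[i]}(V)'$, and let $\cK$ be the kernel in the abelian category of perverse sheaves, so that $0 \to \cK \to \cP \to \cQ \to 0$ is short exact. Fix an irreducible subscheme $U \subseteq V$ of dimension $d$; I must produce a dense open $W \subseteq U$ with $\cH^{-d}\cQ|_W \in \cJ(W)[i-d]$. The hypothesis on $\cP$ supplies a dense open $W_0 \subseteq U$ with $\cH^{-d}\cP|_{W_0} \in \cJ(W_0)[i-d]$.

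The key first step is to shrink $W_0$ so as to kill the connecting piece of the long exact sequence. By perversity of $\cK$, we have $\dim \supp \cH^{-d+1}\cK \leq d-1$. Let $Z \subseteq V$ be the closure of this support; then $\dim Z \leq d-1 < d$, so $Z \cap U$ is a proper closed subset of the irreducible scheme $U$, and $W := W_0 \setminus Z$ is a dense open of $U$. By construction $\cH^{-d+1}\cK|_W = 0$, and since $\cJ$ is preserved under pullback along immersions by hypothesis, the property $\cH^{-d}\cP|_{W_0} \in \cJ(W_0)[i-d]$ descends to $\cH^{-d}\cP|_W \in \cJ(W)[i-d]$.

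Passing to cohomology sheaves of $0 \to \cK \to \cP \to \cQ \to 0$ and restricting to $W$, the segment
\[\cH^{-d}\cP|_W \longrightarrow \cH^{-d}\cQ|_W \longrightarrow \cH^{-d+1}\cK|_W = 0\]
of the long exact sequence exhibits $\cH^{-d}\cQ|_W$ as a quotient of $\cH^{-d}\cP|_W$ in the abelian category of sheaves on $W$. Compatibility of $\cJ(W)$ with the usual t-structure says precisely that the full subcategory of sheaves lying in $\cJ(W)[i-d]$ is a Serre subcategory of sheaves on $W$, and hence stable under quotients. Therefore $\cH^{-d}\cQ|_W \in \cJ(W)[i-d]$, so $\cQ \in \Perv_\cJ^{[i]}(V)'$.

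The only real obstacle is making the correct choice of $W$: one must shrink first using the hypothesis on $\cP$ and then using the perversity estimate on $\cK$, so that $\cH^{-d}\cQ|_W$ really is a quotient (not merely the target of a map with possibly nontrivial cokernel). Once $\cH^{-d+1}\cK$ has been forced to vanish generically on $U$, the abelian-categorical content of the lemma collapses to the built-in Serre-subcategory property of $\cJ(W)$ in degree $i-d$, which is automatic from the hypotheses on $\cJ$.
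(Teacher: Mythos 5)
Your proof is correct and follows essentially the same route as the paper: choose a dense open $W_0$ witnessing membership in $\Perv_\cJ^{[i]}(U)'$, shrink further to remove the (small) support of $\cH^{-d+1}$ of the kernel, observe that $\cH^{-d}\cQ$ becomes a quotient of $\cH^{-d}\cP$ there, and invoke the Serre-subcategory hypothesis on $\cJ$ in a single degree. The only cosmetic difference is that you replace the support of $\cH^{-d+1}\cK$ by its closure before removing it; the paper removes the support directly, which amounts to the same thing.
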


\begin{proof}
Let $\cP\in \Perv_\cJ^{[i]}(V)'$ and let
\[
0\to \cP'\to \cP\to \cP''\to 0
\]
be a short exact sequence of perverse sheaves. Let $U$ be an irreducible 
subscheme of $V$. By assumption, there exists a dense open subscheme $W$ of 
$U$ such that $\cH^{-d} \cP|_W\in \cJ(W)[i-d]$, where $d=\dim(U)$. Let $S$ be 
the support of $\cH^{-d+1}\cP'$, which has dimension $\le d-1$. Then the 
above short exact sequence induces an epimorphism $\cH^{-d}\cP|_{W\backslash 
S} \to \cH^{-d}\cP''|_{W\backslash S}$, which implies that
$\cH^{-d}\cP''|_{W\backslash S}\in \cJ(W\backslash S)[i-d]$. This finishes 
the proof that $\cP''\in \Perv_\cJ^{[i]}(V)'$. 
\end{proof}

\begin{lemma}\label{l.Zar}
Assume (C) holds. Then $\cJ$ satisfies Zariski descent. In other words, for 
every subscheme $V$ of $X$ with a Zariski open cover $(U_\alpha)$ of $V$ and 
every $L\in D^b_c(V,\Qlb)$ satisfying $L|_{U_\alpha}\in \cJ(U_\alpha)$ for 
all $\alpha$, we have $L\in \cJ(V)$.
\end{lemma}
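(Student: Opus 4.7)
The plan is to reduce to a two-element cover by quasi-compactness and induction, then invoke the open--closed excision triangle and apply (C) to both ends.

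Since $X$ is of finite type over $k$, every subscheme $V$ is quasi-compact, so after passing to a finite subcover $V = U_1 \cup \cdots \cup U_n$ I induct on $n$: setting $U' = U_1 \cup \cdots \cup U_{n-1}$, the restrictions $L|_{U_\alpha}$ lie in $\cJ(U_\alpha)$ for $\alpha < n$, so by induction $L|_{U'} \in \cJ(U')$, reducing us to the case $V = U' \cup U_n$ of two opens.

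Assume now $V = U_1 \cup U_2$ with $U_1, U_2$ open. Let $j \colon U_1 \hookrightarrow V$ be the open immersion and $i \colon Z \hookrightarrow V$ the complementary closed immersion, so in particular $Z \subseteq U_2$. The excision triangle
\[j_! j^* L \to L \to i_* i^* L \to j_! j^* L[1]\]
reduces the goal to showing the two outer terms lie in $\cJ(V)$, since $\cJ(V)$ is stable under extensions. The hypothesis $L|_{U_1} \in \cJ(U_1)$ together with (C) applied to $j$ yields $j_! j^* L \in \cJ(V)$. For the right-hand term, the key point is to view $Z$ as a closed subscheme of $U_2$: letting $k \colon Z \hookrightarrow U_2$ denote that closed immersion, one has $i^* L = k^*(L|_{U_2}) \in \cJ(Z)$ by the assumed preservation of $\cJ$ under pullback along immersions. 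Since $i$ is a closed immersion, $i_* = i_!$, and (C) applied to $i$ gives $i_* i^* L \in \cJ(V)$.

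I do not expect a substantial obstacle. The one point requiring attention is the recognition that the closed complement $Z = V \setminus U_1$ must be examined through its inclusion into $U_2$ rather than into $V$, so as to invoke the hypothesis $L|_{U_2} \in \cJ(U_2)$; once one factors $i$ as $(U_2 \hookrightarrow V) \circ k$ this is immediate. Everything else is formal bookkeeping with (C), the immersion-pullback stability, and the extension-closure of $\cJ$.
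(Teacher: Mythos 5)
Your proof is correct, and it takes a genuinely different route from the paper's. The paper first uses the standing assumption that $\cJ$ is compatible with the \emph{usual} t-structure to reduce to the case $L\in\Shv_c(V,\Qlb)$, then observes that $L$ is a quotient of $\bigoplus_\alpha u_{\alpha!}u_\alpha^*L$, which lies in $\cJ(V)$ by (C), and concludes by the Serre-subcategory property of $\cA_\cJ^{[i]}$ in the heart. You instead induct on the size of the cover to reduce to two opens, use the open--closed excision triangle $j_!j^*L\to L\to i_*i^*L\to{}$, apply (C) (extended to immersions of subschemes of $X$, as in the remark after the theorem) to both outer terms, and finish by extension-closure of $\cJ(V)$. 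The key observation you flag --- that the complementary closed $Z=V\setminus U_1$ must be factored through its inclusion into $U_2$ to make the hypothesis $L|_{U_2}\in\cJ(U_2)$ usable --- is exactly right and is the crux of your argument. Your route is slightly more elementary in that it never invokes the heart-level Serre property or even the reduction to sheaves; it only uses pullback-stability, extension-closure, and (C), all of which are explicit hypotheses. The cost is a bit more bookkeeping with the induction, but that is negligible. Both proofs are complete and correct.
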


\begin{proof}
By the assumption that $\cJ$ is compatible with the usual t-structure, we may 
assume that $L\in \Shv_c(V,\Qlb)$. We may assume further that the cover is 
finite. Let $u_\alpha\colon U_\alpha\to V$ be the open immersion. Then $L$ is 
a quotient of $\bigoplus_{\alpha} u_{\alpha!}u_\alpha^*L$, which is in 
$\cJ(V)$ by (C). 
\end{proof}

\begin{proof}[Proof of Theorem \ref{t.trunc}]
We will show that (B) is equivalent to the following weakened form:
\begin{enumerate}
\item[(wB)] For every immersion $j\colon  V\to X$ with $V$ \emph{affine}, 
    every $i\in \Z$, and every \emph{simple} perverse sheaf $\cP\in 
    \Perv_\cJ^{[i]}(V)$, we have $j_{!*}\cP\in \Perv_\cJ^{[i]}(X)$.
\end{enumerate}

(B)$\implies$(wB). Trivial.

(wB)$\implies$(D). Condition (D) is the analogue of \cite[Corollaire 
5.3.7]{BBD}. There it is proved as a corollary of the existence of the weight 
filtration \cite[Proposition 5.3.5]{BBD}, which has no analogue in our 
setting, but we can adapt the proof as follows. Clearly $\Perv_\cJ^{[i]}(V) 
\subseteq \Perv_\cJ^{[i]}(V)'$. For the other direction, let $\cP\in 
\Perv_\cJ^{[i]}(V)'$. We prove $\cP\in \Perv_\cJ^{[i]}(V)$ by induction on 
the length of $\cP$. The case $\cP=0$ is trivial. Assume that $\cP$ is 
nonzero. Then there exists a short exact sequence of perverse sheaves 
\[
0\to \cP'\to \cP\to \cP''\to 0
\]
with $\cP''$ simple. There exists an immersion $u\colon U\to V$ with $U_\red$ 
irreducible regular of dimension $d$ such that $\cP''=u_{!*}(\cL[d])$, where 
$\cL$ is a lisse sheaf. By Lemma \ref{l.D}, $\cP''\in \Perv_\cJ^{[i]}(V)'$. 
Thus, up to replacing $U$ by a dense open subscheme, we may assume that $U$ 
is affine and $\cL\simeq \cH^{-d}\cP''|_U\in \cJ(U)[i-d]$. Then $\cP''\in 
\cJ(V)[i]$ by (wB). For any $d'\in \Z$, the above short exact sequence 
induces an exact sequence 
\[\cH^{-d'-1}\cP''\to \cH^{-d'}\cP'\to 
\cH^{-d'}\cP\] with $\cH^{-d'-1}\cP''\in \cJ(V)[i-d'-1]\subseteq \cJ(V)[i-d']$. 
The assumption $\cP\in \Perv_\cJ^{[i]}(V)'$ then implies $\cP'\in 
\Perv_\cJ^{[i]}(V)'$. By induction hypothesis, it follows that $\cP'\in 
\cJ(V)[i]$. Therefore, $\cP\in \cJ(V)[i]$. 

(D)$\implies$(E). Clearly $\Perv_\cJ^{[i]}(V)$ is stable under extensions. By 
(D) and Lemma \ref{l.D}, $\Perv_\cJ^{[i]}(V)$ is also stable under quotients. 
The stability under subobjects follows formally: If $\cP\in 
\Perv_\cJ^{[i]}(V)$ and $\cP'$ is a perverse subsheaf of $\cP$, then 
$\cP/\cP'\in \cJ(V)[i]$ and the distinguished triangle $\cP'\to \cP \to 
\cP/\cP'\to \cP'[1]$ implies that $\cP'\in \cJ(V)[i]$. Thus 
$\Perv_\cJ^{[i]}(V)$ is a Serre subcategory. It remains to show that $\cJ(V)$ 
is stable under perverse truncations. The proof is identical to that of 
\cite[Th\'eor\`eme 5.4.1]{BBD}. Let $L\in \cJ(V)$. We proceed by descending 
induction on $i$ to show $\ptau^{\ge i}L\in \cJ(V)$, which implies $\ptau^{< 
i}L\in \cJ(V)$. For $i$ large enough, $\ptau^{\ge i} L=0$ and the assertion 
is trivial. Assume now that $\ptau^{\ge i+1}L$ and $\ptau^{<i+1}L=\ptau^{\le 
i}L$ are in $\cJ(V)$. Let $U$ be an irreducible subscheme of $V$ of dimension 
$d$. Let $S$ be the support of $\cH^{i-d+1}(\ptau^{<i} L)$, which has 
dimension $\le d-2$. The distinguished triangle 
\[\ptau^{<i} L \to \ptau^{\le i}L\to (\pH^i L)[-i]\to 
(\ptau^{<i} L)[1]
\] 
induces an epimorphism $\cH^{i-d}(\ptau^{\le i}L)|_{U\backslash S}\to 
\cH^{-d}(\pH^i L)|_{U\backslash S}$. Since $\cH^{i-d}(\ptau^{\le i}L)\in 
\cJ(U)[i-d]$, this implies $\cH^{-d}(\pH^i L)|_{U\backslash S}\in 
\cJ(U\backslash S)[i-d]$. Thus $\pH^i L\in 
\Perv_\cJ^{[i]}(V)'=\Perv_\cJ^{[i]}(V)$ by (D). It then follows from the 
distinguished triangle $(\pH^i L)[-i]\to \ptau^{\ge i}L \to \ptau^{\ge{i+1}} 
L\to (\pH^i L)[-i+1]$ that $\ptau^{\ge i} L\in \cJ(V)$. 

(E)$\implies$(F). Trivial.

(wB)$\implies$(C). It suffices to show that $j_!\cF\in \cJ(X)[i]$ for $\cF\in 
\Shv_c(V,\Qlb)\cap (\cJ(V)[i])$. Note that $\cF$ is a successive extension of 
sheaves of the form $u_!\cL$, where $u\colon U\to V$ is an immersion with 
$U_\red$ irreducible regular affine and $\cL=u^*\cF$ is a simple lisse sheaf 
on $U$. Thus we may assume that $V_\red$ is irreducible regular affine and 
$\cF$ is a simple lisse sheaf. In this case $j_!\cF$ is a subsheaf of $\cH^0 
(j_{!*}(\cF[d])[-d])$, which is in $\cJ(X)[i]$ by (wB). Here $d=\dim(V)$. It 
follows that $j_!\cF$ is in $\cJ(X)[i]$. 

(C)+(F)$\implies$(D). For any affine immersion $u\colon U\to V$ of subschemes 
of $X$ and any $\cP\in \Perv_\cJ^{[i]}(U)$, we have $u_!\cP\in 
\Perv_\cJ^{[i]}(V)$ by (C), which implies that the quotient $u_{!*}\cP$ is in 
$\Perv_\cJ^{[i]}(V)$ by (F). Since (wB)$\implies$(D), this implies  
$\Perv_\cJ^{[i]}(V)=\Perv_\cJ^{[i]}(V)'$ for every separated subscheme $V$ of 
$X$ (or more generally for $V$ of affine diagonal). To prove (D) in general, 
choose a Zariski open cover $(U_\alpha)$ of $V$ with $U_\alpha$ separated for 
all $\alpha$. It suffices to prove the inclusion 
$\Perv_\cJ^{[i]}(V)'\subseteq \Perv_\cJ^{[i]}(V)$. Let $\cP\in 
\Perv_\cJ^{[i]}(V)'$. Then $\cP|_{U_\alpha}\in 
\Perv_\cJ^{[i]}(U_\alpha)'=\Perv_\cJ^{[i]}(U_\alpha)$ for every $\alpha$, 
which implies $\cP\in \Perv_\cJ^{[i]}(V)$ by Lemma \ref{l.Zar}. 

(C)+(E)$\implies$(B). This is clear, because $j_{!*}\cP$ is a quotient of 
$\pH^0j_!\cP$. 

(A)$\implies$(B). Since we have already proven 
(wB)$\implies$(C)+(F)$\implies$(B), it suffices to show (wB). Let $j\colon 
V\to X$ be an immersion and let $\cP$ be a simple perverse sheaf in 
$\Perv_\cJ^{[i]}(V)$. Up to replacing $V$ by a subscheme, we may assume that 
$V$ is irreducible and $\cP\in \Shv_c(V,\Qlb)[d]$, where $d=\dim(V)$. Let 
$V_0$ be the closure of $V$ in $X$. Then $j$ can be decomposed into a 
sequence of immersions 
\[V=V_d\xrightarrow{j_{d}} V_{d-1}\to \dots\to V_1 \xrightarrow{j_1} V_0\xrightarrow{j_0}X\]
such that $(V_{n-1}-V_{n})_\red$ is regular of dimension $n-1$ and 
$(R(j_{n}j_{n+1}\dotsm j_{d})_{*}\cP)|_{V_{n-1}-V_n}$ has lisse cohomology 
sheaves for all $1\le n\le d$. In this case, we have
\[j_{!*}\cP\simeq Rj_{0*}\tau^{\le -1}Rj_{1*}\dotsm \tau^{\le -d}Rj_{d*} \cP,\]
which is in $\cJ(X)[i]$ by (A).
\end{proof}

\begin{example}\label{e.wt}
Let $k=\F_q$ be a finite field. For any scheme $X$ of finite type over~$k$, 
let $\cJ(X)\subseteq D^b_c(X,\Qlb)$ be the full subcategory consisting of 
mixed complexes of weight $\le 0$. Then conditions (B), (C), (D), (E), (F) of 
Theorem \ref{t.trunc} hold. In fact, (C) is trivial and (F) is 
\cite[Proposition 5.3.1]{BBD}. Condition (A) does not hold in this case.
\end{example}

Our main application of Theorem \ref{t.trunc} is to integral complexes. Let 
us recall the definition and basic properties. For completeness, we include 
both the case over a finite field and the case over a local field. For the 
proofs of our  theorems about rigid spaces, only the finite field case is 
needed. 

\begin{defn}\label{d.int}
Let $X$ be a scheme of finite type over $\F_q$ (resp.\ a local field $K$ of 
residue field $\F_q$) and let $\ell\nmid q$ be a prime number. 
\begin{enumerate}
\item We say that $\cF\in \Shv_c(X,\Qlb)$ is \emph{integral} if for every 
    geometric point $\bar x$ above a closed point $x$ of $X$, the 
    eigenvalues of the geometric Frobenius $\Fr_x\in G_x$ (resp.\ of an 
    element $g_x\in G_x$ projecting to the geometric Frobenius) acting on 
    $\cF_{\bar x}$ are algebraic integers. 
\item We say that $\cF\in \Shv_c(X,\Qlb)$ is \emph{inverse integral} if for 
    every geometric point $\bar x$ above a closed point $x$ of $X$, the 
    eigenvalues of the geometric Frobenius $\Fr_x\in G_x$ (resp.\ of an 
    element $g_x\in G_x$ projecting to the geometric Frobenius) acting on 
    $\cF_{\bar x}$ are inverses of algebraic integers. 
\item We say that a complex $L\in D^b_c(X,\Qlb)$ is \emph{integral} if 
    $\cH^iL$ is integral for all $i\in \Z$. 
\item We say that a complex $L\in D^b_c(X,\Qlb)$ is \emph{$\Id$-inverse 
    integral} if $\cH^iL(i)$ is inverse integral for all $i\in \Z$. 
\end{enumerate} 
\end{defn}

As a consequence of Grothendieck's monodromy theorem, up to multiplication by 
roots of unity, eigenvalues of a lift of geometric Frobenius do not depend on 
the choice of the lift \cite[Lemme 1.7.4]{WeilII}. Thus the above definition 
in the local field case does not depend on the choice of $g_x$.

\begin{remark}
In \cite{Zint}, we consider more generally integrality and inverse 
integrality over the field of fractions $K_0$ of an excellent Henselian 
discrete valuation ring of residue field $\F_q$. The completion 
$\widehat{K_0}$ of $K_0$ is a local field. By a valuative criterion for 
integrality \cite[Corollary 3.10]{comp}, for a scheme $X$ of finite type over 
$K_0$, $\cF\in \Shv_c(X,\Qlb)$ is integral if and only if $\pi^*\cF$ is 
integral, where $\pi\colon X_{\widehat{K_0}}\to X$ is the projection. In 
fact, the valuative criterion also holds for inverse integrality by a similar 
proof. In particular, $\cF\in \Shv_c(X,\Qlb)$ is inverse integral if and only 
if $\pi^*\cF$ is inverse integral. Therefore, there is no loss of generality 
to work over a (complete) local field. 
\end{remark}

\begin{theorem}\label{t.push}
For any morphism $f\colon X\to Y$ of schemes of finite type over $\F_q$ or a 
local field $K$, integral complexes are stable under $Rf_!$ and $Rf_*$. The 
same holds for $\Id$-inverse integral complexes. 
\end{theorem}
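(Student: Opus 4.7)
The plan is to reduce via Nagata compactification to a few elementary operations, appealing to the theory of integral complexes developed by Deligne \cite{SGA7II} and extended in \cite{Zint}. Factor $f=\bar f\circ j$ with $j$ an open immersion and $\bar f$ proper; since $R\bar f_!=R\bar f_*$, we have $Rf_*=R\bar f_*\circ Rj_*$ and $Rf_!=R\bar f_*\circ j_!$, so it suffices to handle $j_!$, $Rj_*$ (for open immersions) and $R\bar f_*$ (for proper $\bar f$) separately. Stability under $j_!$ is immediate from the stalk description: a geometric stalk of $j_!\cF$ is either $\cF_{\bar x}$ or zero. For $R\bar f_*$ with $\bar f$ proper, proper base change identifies the geometric stalk at a closed point $\bar y$ of $Y$ with $R\Gamma$ of the geometric fiber $\bar f^{-1}(\bar y)$, so after a d\'evissage on cohomology sheaves we reduce to the integrality of $H^*(X,\cF)$ when $X$ is proper and $\cF$ integral. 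Over $\F_q$ this is classical; over a local field $K$, one spreads out to a proper $\cO_K$-model $\cX$, transfers to the special fiber via nearby cycles to write $H^*(X_{\bar K},\cF)=H^*(\cX_{\bar s}, R\Psi\tilde\cF)$, and reduces to the $\F_q$ case using that $R\Psi$ preserves integrality.

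The crux is the preservation of integrality under $Rj_*$ for an open immersion $j\colon V\to X$. I proceed by Noetherian induction on the closed complement $Z=X\setminus V$. Writing $i\colon Z\to X$, the distinguished triangle
\[j_!\cF\to Rj_*\cF\to i_*i^*Rj_*\cF\to j_!\cF[1]\]
together with the inductive hypothesis reduce the question to showing $i^*Rj_*\cF$ is integral on $Z$. Since integrality is a condition at closed points, which are dense, we may shrink $Z$ to a dense open subscheme and work locally at a generic point. Combining de Jong's alterations with Gabber's absolute purity reduces the computation to a simple-normal-crossings situation, where the boundary cohomology is an explicit tensor product of Tate twists and is manifestly integral; this local computation is the foundational input carried out in \cite{Zint}.

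The $\Id$-inverse integral case is proved by the same d\'evissage, invoking the parallel inverse-integrality statements for compactly supported cohomology and nearby cycles of finite-type schemes. I expect the main obstacle to be the local computation in the preceding paragraph; the local field case requires in addition controlling the Frobenius action across the generic and special fibers of an integral model, for which the valuative criterion for integrality invoked in the excerpt and the nearby cycles techniques of \cite{Zint} provide the essential input.
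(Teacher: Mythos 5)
The paper does not supply an argument for Theorem~\ref{t.push}: it is a citation, to \cite[XXI, Th\'eor\`emes 5.2.2, 5.4]{SGA7II} for $Rf_!$ over $\F_q$, to \cite[Appendix, Theorem 0.2]{Esnault} for integrality under $Rf_!$ over $K$, and to \cite[Proposition 6.4]{Zint} for the remaining cases ($Rf_*$ and the $\Id$-inverse integral variants). Your sketch reconstructs the d\'evissage common to those sources --- Nagata compactification splitting $f$ into an open immersion and a proper map, the trivial $j_!$ case, proper base change plus nearby cycles for proper pushforward, and the serious case $Rj_*$ handled by alterations and purity --- so it is aligned in route with the cited proofs rather than offering an alternative one.

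Two places where your sketch is thinner than it should be. First, Nagata compactification requires $f$ separated, while the statement imposes no such hypothesis; the non-separated case of $Rf_*$ (and the relevant definition of $Rf_!$) is handled in \cite{Zint} by a Zariski hypercover / Mayer--Vietoris reduction to the separated case, and you should flag this. Second, your Noetherian induction for $Rj_*$ does not quite close as written: after replacing $Z=X\setminus V$ by a dense open $W\subseteq Z$ on which $i^*Rj_*\cF$ is computed explicitly, the inductive step should be run by factoring $j$ as $V\hookrightarrow V\cup W\hookrightarrow X$ so that the second arrow has strictly smaller closed complement $Z\setminus W$, with $Rj''_*\cF$ already known to be integral on $V\cup W$; merely shrinking $Z$ and "working locally at a generic point" does not by itself invoke the inductive hypothesis. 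Finally, the clause "manifestly integral" in the simple-normal-crossings case is doing real work (it is exactly the local computation of \cite[XXI 5.4]{SGA7II} or its alteration-based version in \cite{Zint}), and the $\Id$-inverse integral case must be checked there independently, since Verdier duality does not exchange $Rj_*$ with itself.
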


The case of $Rf_!$ for $f$ separated over $\F_q$ is a theorem of Deligne 
\cite[XXI, Th\'eor\`emes 5.2.2, 5.4]{SGA7II} and the integral case of $Rf_!$ 
over $K$ is a theorem of Deligne and Esnault \cite[Appendix, Theorem 
0.2]{Esnault}. The other cases are proved in \cite[Proposition 6.4]{Zint}. 

\begin{example}\label{e.int}
Let $k$ be a finite or a local field.
\begin{enumerate}
\item For any scheme $X$ of finite type over~$k$, let $\cJ(X)\subseteq 
    D^b_c(X,\Qlb)$ be the full subcategory consisting of integral 
    complexes. Then condition (A) and hence all conditions of Theorem 
    \ref{t.trunc} hold for $\cJ$. 
    
\item For any scheme $X$ of finite type over~$k$, let $\cJ'(X)\subseteq 
    D^b_c(X,\Qlb)$ be the full subcategory consisting of $\Id$-inverse 
    integral complexes. Then condition (A) and hence all conditions of 
    Theorem \ref{t.trunc} hold for $\cJ'$. 
\end{enumerate}

In particular, for any scheme $X$ of finite type over $k$ of dimension $d$, 
$\IC_{X,\Qlb}[-d]$ is integral and $\Id$-inverse integral by (B). 
\end{example}

Let us spell out condition (E), of which the finite field case will be used 
in the proof of Theorem \ref{t.psi}. 

\begin{cor}\label{c.trunc}
Let $X$ be a scheme of finite type over~$\F_q$ or a local field $K$. 
\begin{enumerate}
\item Integral complexes are stable under perverse truncations and integral 
    perverse sheaves are stable under subquotients. 
\item $\Id$-inverse integral complexes are stable under perverse 
    truncations and $\Id$-inverse integral perverse sheaves are stable 
    under subquotients. 
\end{enumerate}
\end{cor}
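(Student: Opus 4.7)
The plan is to obtain this corollary as an essentially immediate unpacking of condition (E) of Theorem \ref{t.trunc}, once Example \ref{e.int} is invoked. First I would observe that by Example \ref{e.int}, for any scheme $X$ of finite type over $\F_q$ or a local field $K$, the full subcategory $\cJ(V)\subseteq D^b_c(V,\Qlb)$ consisting of integral complexes (for part (a)) and of $\Id$-inverse integral complexes (for part (b)) satisfies condition (A) of Theorem \ref{t.trunc}; hence all six equivalent conditions hold, and in particular condition (E).

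Next I would unwind condition (E). By the definition of compatibility with a t-structure, (E) simultaneously asserts that $\cJ(V)$ is stable under the perverse truncation functors $\ptau^{\le i}$ and $\ptau^{\ge i}$, and that each $\cA_\cJ^{[i]}=\Perv_c(V,\Qlb)\cap \cJ(V)[i]$ is a Serre subcategory of $\Perv_c(V,\Qlb)$. The first assertion is precisely the perverse-truncation half of each part of the corollary. Specialising the second assertion to $i=0$, the subcategory $\cA_\cJ^{[0]}$ is exactly the category of integral (resp.\ $\Id$-inverse integral) perverse sheaves, and being a Serre subcategory entails in particular stability under subquotients, giving the other half of each part.

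There is no genuine obstacle: the substantive work has already been carried out. The nontrivial implication (A)$\Rightarrow$(E) of Theorem \ref{t.trunc} (which proceeds through (wB), (C)+(F), and (D)) supplies the structural statement, while the hypothesis (A) for the categories of interest is guaranteed by Theorem \ref{t.push}, which says that $Rj_*$ preserves integral and $\Id$-inverse integral complexes for any immersion $j$. The corollary is therefore just the specialisation of condition (E) to the specific choices of $\cJ$ in Example \ref{e.int}.
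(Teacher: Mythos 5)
Your proposal is correct and is precisely the paper's intent: the paper states "Let us spell out condition (E)" immediately before the corollary, so it is meant as the specialisation of Theorem \ref{t.trunc}(E) to the choices of $\cJ$ in Example \ref{e.int} (whose condition (A) is supplied by Theorem \ref{t.push}). Your unpacking of "compatible with the perverse t-structure" into the two halves of the corollary is exactly the required reading.
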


Now we discuss algebraic analogues of Theorem \ref{t.main}.

\begin{remark}\label{r.sch}
Let $k=\F_q$ (resp.\ $k=K$ be a local field of residue field $\F_q$). Let $X$ 
be a scheme of dimension $d$ of finite type over $k$. Let $\ell\neq 
p=\chara(\F_q)$ be a prime number.  Let $g=\Fr_q$ (resp.\ $g\in G_K$ be an 
element projecting to $\Fr_{q}$). For all $i\in \Z$, the eigenvalues of $g$ 
acting on $H^i_{(c)}(X_{\bar k},\Q_\ell)$ and $\IH^{i-d}_{(c)}(X_{\bar 
k},\Q_\ell)$ are algebraic integers that are units away from $p$ and their 
$q$-slopes are contained in $[0,i]\cap [i-d,d]$. 

The case of $H^i_{(c)}(X_{\bar k},\Q_\ell)$ is proved in \cite[XXI, 
Th\'eor\`emes 5.2.2, 5.4]{SGA7II},  \cite[Appendix, Theorem 0.2]{Esnault}, 
\cite[Proposition 6.4]{Zint}. We point out two proofs of the case of 
$\IH^{i-d}_{(c)}(X_{\bar k},\Q_\ell)$, both relying on Theorem \ref{t.trunc} 
at least in the local field case. We may assume that $X$ has pure dimension 
$d$.

Let $a\colon X\to \Spec(k)$ be the structural morphism. By Example 
\ref{e.int} and Theorem \ref{t.push}, $Ra_*\IC_{X,\Qlb}[-d]$ and 
$Ra_!\IC_{X,\Qlb}[-d]$ are integral and $\id$-inverse integral. This means 
that the eigenvalues of $g$ acting on $\IH^{i-d}_{(c)}(X_{\bar k},\Q_\ell)$ 
are algebraic integers that are units away from $p$ and their $q$-slopes are 
contained in $[0,i]$. The remaining bounds for $q$-slopes follow by duality: 
We have a $G_k$-equivariant perfect pairing $\IH^{i-d}(X_{\bar 
k},\Q_\ell)\times \IH^{d-i}_{c}(X_{\bar k},\Q_\ell)\to \Q_\ell(-d)$.

Alternatively, we can choose a projective alteration $f\colon Y\to X$ with 
$Y$ smooth \cite{dJ}. In the finite field case, by the decomposition and hard 
Lefschetz theorems of Beilinson--Bernstein--Deligne--Gabber \cite[Corollaire 
5.3.11, Th\'eor\`eme 5.4.10]{BBD}, $\IC_{X,\Q_\ell}$ is a direct summand of 
$Rf_*\Q_\ell[d]$, which implies that $\IH^{i-d}_{(c)}(X_{\bar k},\Q_\ell)$ is 
a $G_k$-equivariant direct summand of $H^i_{(c)}(Y_{\bar k},\Q_\ell)$. The 
same holds in the local field case by the formalism of horizontal complexes 
introduced by A.~Huber \cite{HuberA} and extended by Morel \cite{Morel}. See 
\cite[Section 5.1]{SZ} for the analogue of \cite[Corollaire 5.3.11]{BBD} and 
below for the hard Lefschetz theorem. 
\end{remark}

\begin{remark}
Let $k$ be a finitely generated field. Then the analogue of Theorem 
\ref{t.trunc} holds for the derived category $D^b_h(X,\Qlb)$ of horizontal 
complexes. This implies the hard Lefschetz theorem for horizontal perverse 
sheaves (Corollary \ref{c.HL}) as follows. 

Let $\cJ(X)\subseteq D^b_h(X,\Qlb)$ be the full subcategory consisting of 
mixed complexes of weight $\le 0$ as in Example \ref{e.wt}.  Then as before 
conditions (B), (C), (D), (E), (F) of Theorem \ref{t.trunc} hold. In fact, 
(C) is trivial and (F) follows from \cite[Proposition 5.3.1]{BBD} (see 
\cite[Proposition 3.4]{HuberA} and \cite[Section 2.6]{Morel}). In particular, 
even though the weight filtration does not exist for horizontal perverse 
sheaves in general, conditions (D) and (E), which are analogues of 
\cite[Corollaire 5.3.7, Th\'eor\`eme 5.4.1]{BBD}, hold for horizontal 
complexes. One can then repeat the proof of the hard Lefschetz theorem 
\cite[Th\'eor\`eme 5.4.10]{BBD}, which relies on condition (E). 
\end{remark}

\begin{cor}[Hard Lefschetz theorem for horizontal perverse 
sheaves]\label{c.HL}
Let $f\colon X\to Y$ be a projective morphism of schemes of finite type over 
a finitely generated field $k$ and let $\cL$ be an $f$-ample line bundle on 
$X$. Let $\ell$ be a prime number invertible in $k$ and let $\cP$ be a pure 
horizontal $\Qlb$-perverse sheaf on $X$. Then, for all $i\ge 0$, cupping with 
$c_1(\cL)^i$ induces an isomorphism 
\[\pH^{-i}Rf_*\cP\to \pH^{i}Rf_*\cP(i).\]
\end{cor}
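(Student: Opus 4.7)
The plan is to transcribe BBD's proof of \cite[Th\'eor\`eme 5.4.10]{BBD} into the horizontal setting, using the horizontal analogues of conditions (D), (E), (F) established in the preceding remark together with the horizontal decomposition theorem from \cite[Section 5.1]{SZ}. First I would perform the standard reductions: the statement is Zariski-local on $Y$, so after shrinking $Y$, compactifying, and composing with its structure morphism, the assertion reduces to the absolute statement that $c_1(\cL)^i\colon H^{-i}(X,\cP)\to H^i(X,\cP)(i)$ is an isomorphism when $X$ is projective and $\cL$ is ample. Using the horizontal decomposition theorem together with the semisimplicity of pure horizontal perverse sheaves --- the analogue of \cite[Th\'eor\`eme 5.3.8]{BBD}, which follows from condition (E) applied to the subcategories of horizontal mixed complexes of weight $\le w$ and of weight $\ge w$ --- one further reduces to the case $\cP=j_{!*}(\cL[d])$ for some immersion $j\colon U\to X$ with $U$ smooth of pure dimension $d$ and some simple pure horizontal lisse sheaf $\cL$ on $U$.

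Then I would proceed by induction on $d=\dim U$, following Deligne's argument as carried out in BBD. The inductive step uses the Lefschetz hyperplane theorem together with the decomposition theorem on a smooth hyperplane section $Z\subset X$ to compare hard Lefschetz on $X$ with that on $Z$, and condition (E) is invoked at each stage to preserve purity under the perverse truncations that appear. The base case, where $\cL$ extends to a pure horizontal lisse sheaf on $X$ smooth projective, is Deligne's hard Lefschetz theorem; in the horizontal framework one descends to a spread-out model of finite type over $\Spec \Z$ and invokes Deligne's theorem on closed-point fibers.

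The main obstacle --- and the reason that the analogue of Theorem \ref{t.trunc} for horizontal complexes is invoked in this paper --- is the absence of a global weight filtration on horizontal perverse sheaves, a tool on which BBD's proof formally relies. The substitute, as indicated in the preceding remark, is precisely conditions (E) and (F): perverse truncations preserve purity, and weight-bounded horizontal perverse sheaves form Serre subcategories. These are the only consequences of the weight filtration that BBD actually use in Sections 5.3 and 5.4 of \cite{BBD}, so their arguments go through essentially unchanged once (E) and (F) are available. The technical work, then, is bookkeeping: verifying that each step of the BBD proof survives this substitution.
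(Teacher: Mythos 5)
Your overall strategy matches the paper's: replay \cite[Th\'eor\`eme 5.4.10]{BBD} in the horizontal setting, using the analogues of conditions (D), (E), (F) from Theorem \ref{t.trunc} as the substitute for the (unavailable) weight filtration, together with the horizontal decomposition theorem of \cite[Section 5.1]{SZ} and Morel's formalism. This is exactly the argument indicated in the remark preceding the corollary, and the paper offers no more than this sketch.

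One claim in your proposal needs correction. You assert that geometric semisimplicity of pure horizontal perverse sheaves --- the analogue of \cite[Th\'eor\`eme 5.3.8]{BBD} --- \emph{follows} from condition (E) applied to the subcategories of mixed complexes of weight $\le w$ and of weight $\ge w$. It does not. Condition (E) only yields that these are Serre subcategories of the category of horizontal perverse sheaves, hence that every perverse subquotient of a pure weight-$w$ perverse sheaf is again pure of weight~$w$; this gives a finite filtration with pure simple subquotients, but not the splitting of that filtration after base change to $\bar k$. In \cite{BBD} the splitting requires an additional weight estimate on $\Ext^1$ between pure perverse sheaves (cf.\ \cite[5.1.15]{BBD}), which is not a formal consequence of (E). In the horizontal setting semisimplicity of pure horizontal perverse sheaves over $\bar k$ is a theorem of Morel's framework and is implicit in the decomposition statement of \cite[Section 5.1]{SZ} that you already invoke; it should be cited, not rederived from~(E). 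With that substitution, the rest of your outline tracks the intended argument.
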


This result was tacitly used in the proof of \cite[Theorem 5.1.5]{SZ}. 

\section{Integrality and nearby cycles}\label{s.3}

Let $K$ be a local field of residue field $\F_q$. Let $\ell\nmid q$ be a 
prime number. We recall the following extension of the notion of integral 
complexes to the fiber product topos on which nearby cycles live. 

\begin{defn}
Let $Y$ be a scheme of finite type over $\F_q$. We say that a complex $L\in 
D^b_c(Y\times_s \eta,\Qlb)$ is \emph{integral} (resp.\ \emph{$\Id$-inverse 
integral}) if $\sigma^* L$ is integral (resp.\ $\Id$-inverse integral). Here 
$s=\Spec(\F_q)$, $\eta=\Spec(K)$, and $\sigma\colon G_{\F_q}\to G_{K}$ is a 
continuous section of the reduction homomorphism $r\colon G_K\to G_{\F_q}$. 
\end{defn}

The above definition does not depend on the choice of the section $\sigma$. 
Using this definition, we can restate Theorem \ref{t.psi} succinctly as part 
(b) of the following. 

\begin{theorem}\label{t.psi2}
Let $\cX$ be an admissible formal $\cO_K$-scheme with generic fiber 
$\cX_\eta$ of dimension $d$.
\begin{enumerate}
\item If $\cX_\eta$ is smooth, then $R\Psi_{\cX}\Qlb$ is integral and 
    $\Id$-inverse integral. 
    
\item Assume $\chara(K)=0$. Then $R\Psi_{\cX}\Qlb$ and 
    $R\Psi_{\cX}\IC_{\cX_\eta,\Qlb}[-d]$ are integral and $\Id$-inverse 
    integral. 
\end{enumerate}
\end{theorem}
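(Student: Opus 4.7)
For part (a), the statement is Zariski local on $\cX_s$, so we may assume that $\cX$ is the formal completion of an affine finite-type $\cO_K$-scheme $X$ with smooth generic fiber $X_\eta = \cX_\eta$ (any admissible affine formal $\cO_K$-scheme admits such an algebraic model). Formal and algebraic nearby cycles agree on the common special fiber, so the question reduces to integrality and $\Id$-inverse integrality of the algebraic nearby cycles $R\Psi_X \Qlb$. Its stalks at geometric points of $X_s$ are étale cohomology groups of schemes of finite type over $K^{\mathrm{sh}}$, and both properties follow from the machinery recalled in Theorem \ref{t.push} (in the Henselian form discussed after Definition \ref{d.int}).

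For the intersection-complex statement in part (b), I would invoke de Jong's theorem (using $\chara K = 0$) to obtain a projective alteration $f\colon \cY \to \cX$ of admissible formal $\cO_K$-schemes with $\cY_\eta$ smooth of dimension $d$. Proper base change for nearby cycles gives
\[
R\Psi_\cX \circ Rf_* \simeq R(f_s \times_s \eta)_* \circ R\Psi_\cY.
\]
Part (a) shows that $R\Psi_\cY \Qlb$ is integral and $\Id$-inverse integral, and the fiber-product-topos analogue of Theorem \ref{t.push} then yields the same for $R\Psi_\cX Rf_* \Qlb$. The decomposition theorem, applied to the pure perverse sheaf $\Qlb_{\cY_\eta}[d] = \IC_{\cY_\eta}$, exhibits $\IC_{\cX_\eta}$ as a direct summand of $\pH^0(Rf_*\Qlb[d])$. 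Since $R\Psi_\cX[-1]$ is perverse $t$-exact (Beilinson), $R\Psi_\cX \IC_{\cX_\eta}[-d]$ is then a perverse subquotient, up to shift, of the integral and $\Id$-inverse integral complex $R\Psi_\cX Rf_*\Qlb$. Transferring Corollary \ref{c.trunc} from schemes to the fiber product topos $\cX_s \times_s \eta$ (via Theorem \ref{t.trunc} applied with $\cJ$ the class of integral, resp.\ $\Id$-inverse integral, complexes) supplies the required preservation under perverse subquotients and closes the argument.

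For $R\Psi_\cX \Qlb$ in part (b), where $\cX_\eta$ is no longer assumed smooth, the plan is to use proper hyperdescent. Iterated de Jong alterations produce a proper hypercover $f_\bullet\colon \cY_\bullet \to \cX$ with each $\cY_n$ an admissible formal $\cO_K$-scheme of smooth generic fiber. Cohomological descent gives $\Qlb_{\cX_\eta} \simeq R(f_\bullet)_* \Qlb_{\cY_{\bullet,\eta}}$, and proper base change then yields
\[
R\Psi_\cX \Qlb \simeq R(f_{\bullet,s} \times_s \eta)_* R\Psi_{\cY_\bullet}\Qlb.
\]
Each term on the right is integral and $\Id$-inverse integral by part (a) and Theorem \ref{t.push}, and in any fixed cohomological degree the stalks of the totalization are finite subquotients of stalks coming from finitely many levels of the hypercover, so both properties survive.

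The main obstacle, to my mind, is twofold. First, one needs to rigorously transfer Theorem \ref{t.trunc} and Corollary \ref{c.trunc} from schemes over a finite field to the fiber product topos $\cX_s \times_s \eta$, and to verify proper base change together with perverse $t$-exactness of $R\Psi_\cX[-1]$ in the admissible formal scheme setting; none of this is entirely routine. Second, invoking the decomposition theorem over the $p$-adic base field $K$ requires either a horizontal-complex argument in the style of A.~Huber and Morel, as alluded to in Remark \ref{r.sch}, or an analogous rigid-analytic version, which is the most delicate ingredient of the IC case.
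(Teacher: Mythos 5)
Your overall scaffolding (algebraize locally, hypercover, reduce $\IC$ to a perverse subobject of a pushforward and invoke Corollary~\ref{c.trunc}) matches the paper, but there are two significant deviations and a couple of smaller slips.

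The most significant gap is in the $\IC$ case. You invoke the decomposition theorem to exhibit $\IC_{\cX_\eta}$ as a direct summand of $\pH^0(Rf_{\eta*}\Qlb[d])$, and you rightly flag that running the decomposition theorem over the $p$-adic field $K$ (via horizontal complexes \`a la A.~Huber--Morel, or some rigid-analytic avatar) is the delicate part of your plan. But the paper sidesteps this entirely: for a resolution $f\colon\cY\to\cX$ with $\cY_\eta$ smooth, $\IC_{\cX_\eta}$ is automatically a perverse \emph{subquotient} of $\pH^0 Rf_{\eta*}(\Qlb[d])$, with no purity or weight input. Since Corollary~\ref{c.trunc} guarantees that integral (resp.\ $\Id$-inverse integral) complexes are stable under perverse truncation and that integral perverse sheaves are stable under \emph{subquotients}, the subquotient statement is all one needs. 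This is precisely the point of Theorem~\ref{t.trunc} and Corollary~\ref{c.trunc}: replacing ``direct summand via decomposition'' by ``subquotient via Serre stability'' removes the need for any weight theory over $K$. Your route, taken literally, would force the harder horizontal-complex argument and obscure what the new machinery buys.

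A second issue is your parenthetical claim in part (a) that ``any admissible affine formal $\cO_K$-scheme admits such an algebraic model.'' This is not true in general; the algebraization the paper cites (Temkin, \cite[Theorem 3.1.3]{Temkin17}) uses precisely the rig-smoothness hypothesis of part (a). Your argument does use smoothness, but the parenthetical, as written, would make the hypothesis superfluous and is false. Relatedly, once you have an algebraic model $X$, the correct reference for integrality and $\Id$-inverse integrality of $R\Psi^\alg_X\Qlb$ is Theorem~\ref{t.psialg} (algebraic nearby cycles preserve (inverse) integrality), not Theorem~\ref{t.push}, which concerns pushforwards; the Henselian remark after Definition~\ref{d.int} is about transporting between a Henselian base and its completion, not about nearby-cycle stalks.

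Finally, two smaller points. Your worry about ``rigorously transferring Theorem~\ref{t.trunc} and Corollary~\ref{c.trunc} to the fiber product topos'' is unfounded: integrality on $Y\times_s\eta$ is \emph{defined} by applying $\sigma^*$ and landing in $D^b_c(Y,\Qlb)$ with $Y$ a scheme over $\F_q$, and $\sigma^*$ respects perverse truncation and subquotients, so Corollary~\ref{c.trunc} applies directly on the scheme side. And for part (b) the paper does not use de Jong alterations of formal schemes but Temkin's characteristic-zero resolution of singularities \cite[Theorem 5.2.2]{Temkin12}, which is both the reason $\chara(K)=0$ is assumed and the source of the \emph{rig-surjective} hypercover $\cY_\bullet\to\cX$ (and, in the $\IC$ case, of the single resolution $\cY\to\cX$). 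Alterations of admissible formal schemes are not a standard off-the-shelf tool, and citing ``de Jong's theorem (using $\chara K=0$)'' misattributes the role of the characteristic-zero hypothesis, since de Jong's result does not need it.
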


In the proof of Theorem \ref{t.psi2}, we will use the following theorem on 
algebraic nearby cycles \cite[Th\'eor\`eme 5.3]{Zint}. 

\begin{theorem}\label{t.psialg}
Let $X$ be a scheme of finite type over $\cO_K$. Then 
\[R\Psi^\alg_{X}\colon 
D^b_c(X_\eta,\Qlb)\to D^b_c(X_s\times_s \eta,\Qlb)
\] 
sends integral complexes to integral complexes and $\Id$-inverse integral 
complexes to $\Id$-inverse integral complexes. In particular, 
$R\Psi^\alg_{X}\Qlb$  is integral and $\Id$-inverse integral. 
\end{theorem}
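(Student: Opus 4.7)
The plan is to reduce Theorem \ref{t.psialg} to the integrality of higher direct images over local fields (Theorem \ref{t.push}) via a stalkwise limit description of algebraic nearby cycles.

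First I would compute the stalk at a geometric point $\bar{x}$ above a closed point $x$ of $X_s$. Writing the strict henselization $X_{(\bar{x})}$ as a filtered limit $\varprojlim_\alpha U_\alpha$ of affine \'etale neighborhoods of $\bar{x}$ in $X$, the standard interpretation of nearby cycles via strict henselization yields a $G_x\times_{G_{\F_q}} G_K$-equivariant isomorphism
\[
(R\Psi^\alg_X \cF)_{\bar{x}} \simeq \varinjlim_\alpha R\Gamma(U_{\alpha,\bar{\eta}}, \cF|_{U_{\alpha,\eta}}),
\]
where $\bar{\eta}$ is a geometric point above $\eta$, the $G_K$-factor acts through its action on $\bar{\eta}$ over $\eta$, and the $G_x$-factor acts by permuting the indexing system through its action on $\bar{x}$ over $x$.

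Next, each generic fiber $U_{\alpha,\eta}$ is of finite type over the local field $K$, and $\cF|_{U_{\alpha,\eta}}$ inherits integrality (resp.\ $\Id$-inverse integrality) from $\cF$. Applying Theorem \ref{t.push} to the structural morphism $U_{\alpha,\eta}\to \Spec(K)$, one sees that $R\Gamma(U_{\alpha,\bar{\eta}}, \cF|_{U_{\alpha,\eta}})$ is integral (resp.\ $\Id$-inverse integral) as a $G_K$-representation; filtered colimits preserve this condition on Frobenius eigenvalues, so the entire stalk is integral (resp.\ $\Id$-inverse integral). To verify Definition \ref{d.int} on the fiber product topos, I would note that the action of $\Fr_x$ on the stalk of $\sigma^* R\Psi^\alg_X \cF$ at $\bar{x}$ is the composite of its natural permutation of $\{U_\alpha\}$ and the action of its image in $G_K$ under $\sigma$ on each term, both of which produce algebraic integer eigenvalues by the preceding step.

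The main obstacle will be the careful bookkeeping of the $G_x\times_{G_{\F_q}} G_K$-equivariance through the stalk/colimit identification, and verifying that the notion of integrality on the fiber product topos (via pullback by $\sigma$) matches integrality of the genuine Galois action on each finite-type piece. The independence of integrality from the specific choice of Frobenius lift up to roots of unity, noted after Definition \ref{d.int}, is what enables this matching to go through cleanly.
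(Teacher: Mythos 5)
The statement is cited in the paper as \cite[Th\'eor\`eme 5.3]{Zint}; no proof is given here, and the surrounding remarks (about semistable pairs and \cite[Lemme 5.6, Lemme 5.10]{Zint}) make clear that the cited proof runs through de Jong's alterations and an explicit computation of nearby cycles for strictly semistable pairs, ultimately reducing to integrality statements over the \emph{finite} residue field. Your proposal is a genuinely different route, but it has two serious problems. First, the colimit formula you write is incorrect: if $U_\alpha$ is an affine \'etale neighborhood of $\bar x$ in $X$, then $U_{\alpha,\bar\eta}$ in your sense means $U_\alpha\times_{\cO_K}\bar K$, and the filtered colimit of $R\Gamma(U_{\alpha,\bar\eta},\cF)$ is not the nearby cycles stalk. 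Already for $X=\Spec\cO_K$ and $\cF=\Q_\ell$, the $U_\alpha$ are the $\Spec\cO_{K'}$ with $K'/K$ finite unramified, the terms are $\Q_\ell^{[K':K]}$, the transition maps are injective, and the colimit is infinite-dimensional, whereas the stalk is $\Q_\ell$. The correct description replaces $U_\alpha\times_{\cO_K}\bar K$ by $U_\alpha\times_{\cO_{K_\alpha}}\bar K$, where $K_\alpha$ is the unramified extension of $K$ with residue field $\kappa(u_\alpha(\bar x))$ (the strict henselization $X_{(\bar x)}$ lives over $\cO_{K^{\mathrm{ur}}}$, and the geometric generic fiber must be taken over that base, not over $\cO_K$); the terms then carry $G_{K_\alpha}$-actions with $K_\alpha$ growing, so the Frobenius lift from $G_K$ does not act on any fixed term, and extracting integrality requires a further argument (e.g.\ passing to powers $\Fr_q^{n_\alpha}$ and using that an algebraic number with an integral power is integral). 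Your proposal papers over all of this.

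Second, and independently, there is a circularity risk. You invoke Theorem \ref{t.push} for $Rf_*$ over a local field $K$ applied to $U_{\alpha,\eta}\to\Spec K$. That case of Theorem \ref{t.push} is \cite[Proposition 6.4]{Zint}, which appears \emph{after} \cite[Th\'eor\`eme 5.3]{Zint} — the very statement you are trying to prove — and its proof (passing through models over $\cO_K$ and comparison with the special fiber) is of the type that typically relies on the nearby cycles integrality theorem. Unless you can supply a proof of $Rf_*$-integrality over local fields that is logically independent of $R\Psi^{\alg}$-integrality, the argument is circular. The paper's (i.e.\ \cite{Zint}'s) approach avoids this by reducing, via alterations and semistable models, to integrality over the \emph{finite} field $\F_q$, which is the genuinely independent input (Deligne, SGA 7 XXI). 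You should either repair the colimit identification and the $G_x\times_{G_{\F_q}}G_K$-equivariance bookkeeping in detail and then establish the needed local-field pushforward integrality from scratch, or follow the alteration/semistable strategy.
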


\begin{remark}
Let $X$ be a scheme of finite type over $\cO_K$ with generic fiber $X_\eta$ 
of dimension $d$. Then $R\Psi^\alg_{X}\IC_{X_\eta,\Qlb}[-d]$ is also integral 
and $\Id$-inverse integral, by Example \ref{e.int} and Theorem 
\ref{t.psialg}. 
\end{remark}

\begin{remark}
We correct two inaccuracies in \cite[Section 5]{Zint} about base change of 
semistable pairs. In the proof of \cite[Lemme 5.6]{Zint}, on line 5 of page 
486, the finite morphism $h\colon X'=\coprod_\zeta X_\zeta \to X_{S'}$ is an 
isomorphism over the generic point of $S'$, but not an isomorphism in 
general. The pair $(X',(g_{S'}h)^{-1}(Z_{S'})_\red)$ is a strictly semistable 
pair over $S'$. 

In \cite[Lemme 5.10(ii)]{Zint}, the condition on $r_m\colon X'_m\to X_{S'}$ 
should be the following: There exists a decomposition of $f$ into morphisms 
of traits $S'\to S_m\to S$ such that $r_m$ is the base change of a proper 
morphism $X_m\to X_{S_m}$ satisfying \cite[5.9.1]{Zint}. 
\end{remark}

\begin{proof}[Proof of Theorem \ref{t.psi2}]
We closely follow the strategy in \cite[Lemmas 3.1.9, 3.2.4]{HZ} to reduce to 
the algebraizable case. 

\emph{Case (a).} Since the problem is local, we may assume that $\cX$ is 
affine. Then, by \cite[Theorem 3.1.3]{Temkin17}, $\cX$ is algebraizable. In 
other words, $\cX$ is the $\varpi$-adic completion of a flat affine 
$\cO_K$-scheme $X$ of finite type, where $\varpi$ is a uniformizer of 
$\cO_K$. By \cite[Theorem A.4.4 and Remark B.1]{HZ}, $R\Psi_{\cX}\Qlb\simeq 
R\Psi^\alg_{X}\Qlb$, which is integral and $\Id$-inverse integral by Theorem 
\ref{t.psialg}. 

\emph{Case (b) for $R\Psi_{\cX}\Qlb$.} By a theorem of Temkin \cite[Theorem 
5.2.2]{Temkin12}, for any admissible formal $\cO_K$-scheme $\cZ$, 
$(\cZ_\eta)_\red$ admits a resolution of singularities, which extends to a 
rig-surjective morphism of admissible formal $\cO_K$-schemes $\cZ'\to \cZ$ 
with $\cZ'_\eta$ smooth. Using this and a standard procedure 
\cite[6.2.5]{HodgeIII}, one constructs a rig-surjective hypercover 
$f_\bullet\colon \cY_\bullet\to \cX$ with $\cY_{i,\eta}$ smooth for all $i\ge 
0$. By cohomological descent, $\Qlb\simeq Rf_{\bullet\eta*}\Qlb$ \cite[Lemma 
3.1.3]{HZ}. Thus 
\[R\Psi_{\cX}\Qlb\simeq R\Psi_{\cX}Rf_{\bullet\eta*}\Qlb\simeq R(f_{\bullet s}\times_s \eta)_*R\Psi_{\cY_\bullet}\Qlb,\]
which induces a spectral sequence 
\[E_{1}^{i,j}=R^j(f_{i, s}\times_s 
\eta)_*R\Psi_{\cY_i}\Qlb\Rightarrow R^{i+j}\Psi_{\cX}\Qlb.
\] 
By (a) and Theorem \ref{t.push}, $R(f_{i, s}\times_s 
\eta)_*R\Psi_{\cY_i}\Qlb$ is integral and $\Id$-inverse integral. It follows 
that $R\Psi_{\cX}\Qlb$ is integral and $\Id$-inverse integral. 

\emph{Case (b) for $R\Psi_{\cX}\IC_{\cX_\eta,\Qlb}[-d]$.} As in the proof of 
\cite[Lemma 3.2.3]{HZ}, up to replacing $\cX$ by a connected component of its 
normalization, we may assume that $\cX_\eta$ is reduced of pure dimension 
$d$. By \cite[Theorem 5.2.2]{Temkin12}, $\cX_\eta$ admits a resolution of 
singularities, which extends to a morphism of admissible formal 
$\cO_K$-schemes $f\colon \cY\to \cX$ with $\cY_\eta$ smooth. Then 
$\IC_{\cX_\eta,\Qlb}$ is a subquotient of $\pH^0 Rf_{\eta*}(\Qlb[d])$. By the 
perverse exactness of $R\Psi_\cX$ \cite[Theorem 4.11]{BH}, 
$R\Psi_{\cX}\IC_{\cX_\eta,\Qlb}$ is a subquotient of 
\[\pH^0 (R\Psi_{\cX}Rf_{\eta*}\Qlb[d])\simeq \pH^0(R(f_{s}\times_s \eta)_*R\Psi_{\cY}\Qlb[d]).\] 
By (a) and Theorem \ref{t.push}, $R(f_{s}\times_s \eta)_*R\Psi_{\cY}\Qlb$ is 
integral and $\Id$-inverse integral. By Corollary \ref{c.trunc}, it follows 
that $R\Psi_{\cX}\IC_{\cX_\eta,\Qlb}[-d]$ is integral and $\Id$-inverse 
integral. 
\end{proof}

The following continuity property for compactly supported cohomology will be 
used in the proof of Theorem \ref{t.main}. 

\begin{lemma}\label{l.cont}
Let $X$ be a qcqs rigid space over a $p$-adic local field $K$ and let $C$ be 
a completed algebraic closure of $K$. Let $V\subseteq X$ be a Zariski open 
subset. Then, for every $i\in \Z$, we have a canonical $G_K$-equivariant 
isomorphism 
\[\varinjlim_{U} H^i_c(U_C,\Q_\ell)\simeq H^i_c(V_C,\Q_\ell),\]
where $U$ runs through quasi-compact open subsets of $V$.
\end{lemma}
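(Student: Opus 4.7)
Let $j\colon V\hookrightarrow X$ be the given open immersion and, for each quasi-compact open $U\subseteq V$, let $j_U\colon U\hookrightarrow X$ be the composed open immersion. The central observation is a sheaf-level filtered colimit on $X_C$:
\[
j_!\Q_{\ell,V}\simeq \varinjlim_U j_{U!}\Q_{\ell,U},
\]
which is verified by a stalk computation: both sides vanish outside $V_C$ and equal $\Q_\ell$ at every geometric point of $V_C$, since any such point lies in some quasi-compact open $U\subseteq V$.

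Next, apply the compactly supported pushforward $R(a_X)_!$ of the structure morphism $a_X\colon X\to \Spec K$. Because $X_C$ is qcqs, $R(a_X)_!$ commutes with filtered colimits of $\ell$-adic sheaves: étale cohomology on qcqs rigid sites commutes with filtered colimits, and the compactification procedure defining $Rf_!$ in the rigid analytic six-functor formalism preserves this property. Combining this with the functoriality $R(a_X)_!\circ j_!\simeq R(a_V)_!$ and $R(a_X)_!\circ j_{U!}\simeq R(a_U)_!$, one obtains
\[
R(a_V)_!\Q_{\ell,V}\simeq R(a_X)_!\bigl(j_!\Q_{\ell,V}\bigr)\simeq \varinjlim_U R(a_X)_!\bigl(j_{U!}\Q_{\ell,U}\bigr)\simeq \varinjlim_U R(a_U)_!\Q_{\ell,U}.
\]
For each $W\in\{V\}\cup\{U\}$, the Verdier duality adjunction $R\Hom_K(R(a_W)_!\Q_\ell,K)\simeq R\Gamma(W_C,\omega_W)$ identifies $H^i R(a_W)_!\Q_\ell$ with the duality definition $H^{-i}(W_C,\omega_W)^\vee=H^i_c(W_C,\Q_\ell)$ from the excerpt (this uses that $K$ is a field, so dualizing a complex of $K$-modules computes duals degreewise). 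Taking $H^i$ of the displayed chain therefore yields the claimed isomorphism, and $G_K$-equivariance follows since every arrow in sight is $G_K$-equivariant.

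The main technical obstacle is verifying the two formal facts used above: that $R(a_X)_!$ on a qcqs rigid space commutes with filtered colimits of $\ell$-adic sheaves, and that the duality-based definition of $H^i_c$ for the possibly non-qcqs space $V$ genuinely computes $H^i R(a_V)_!\Q_\ell$. Both are consequences of the rigid analytic six-functor formalism of Bhatt--Hansen. A more hands-on alternative would be to work directly with the duality definition, noting that $R\Gamma(V_C,\omega_V)\simeq R\varprojlim_U R\Gamma(U_C,\omega_U)$ (by dualizing the sheaf-level colimit above on $X_C$); but then extracting the desired statement requires a Mittag--Leffler type vanishing of $R^1\varprojlim$ on the system $H^\bullet(U_C,\omega_U)$, whereas the $Rf_!$ approach bypasses this subtlety.
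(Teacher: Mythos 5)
Your route is genuinely different from the paper's, but it has a real gap at the place your last paragraph glosses over. The paper proves the lemma in two steps: the separated case is a black box (Huber, \cite[Proposition 2.1(iv)]{HuberComp}), and the general qcqs case is reduced to it by a \v{C}ech spectral sequence for a finite cover of $X$ by separated quasi-compact opens $X_\alpha$, using that each $X^{(-i)}\times_X W$ is separated and that filtered colimits commute with the bounded $E_1$ page. Your approach instead tries to run a single argument via $j_!\Qlb\simeq\varinjlim_U j_{U!}\Qlb$ and $R(a_X)_!$.

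The gap: in the setting of the lemma, $X$ is only qcqs, not separated, and $V$ is typically neither quasi-compact nor separated. In the Bhatt--Hansen formalism that the paper works in, $Rf_!$ is built from compactifications for \emph{separated} maps of qcqs rigid spaces, and the compactly supported cohomology of the nonseparated/non-quasi-compact $V$ is \emph{defined} by the duality formula $H^i_c(V_C,\Q_\ell)=H^{-i}(V_C,D_V\Q_\ell)^\vee$; it is not independently given as $H^i R(a_V)_!\Q_\ell$. So the step ``the duality adjunction identifies $H^iR(a_W)_!\Q_\ell$ with the duality definition'' is either circular for $W=V$ or requires first extending $R(-)_!$ to nonseparated and non-quasi-compact sources and proving agreement with the duality definition --- which is exactly the content one is trying to avoid. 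Likewise, ``$R(a_X)_!$ commutes with filtered colimits'' takes you outside the Zariski-constructible category in which the six functors are constructed, and for $\ell$-adic coefficients this is not a formal consequence of qcqs-ness: one needs a compactification (unavailable if $X$ is not separated) plus a finite cohomological dimension bound at the level of the $\ell$-adic formalism. These are precisely the obstacles the paper's \v{C}ech reduction is designed to avoid: each $X^{(-i)}$ is separated, so Huber's result applies term by term, and no extension of the formalism is needed. If you want to pursue your line, you would essentially have to first re-derive Huber's separated-case statement and then handle the nonseparated ambient space, at which point the \v{C}ech reduction is the path of least resistance anyway.
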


\begin{proof}
In the separated case, this is a result of R.~Huber \cite[Proposition 
2.1(iv)]{HuberComp}. In the general case, choose a finite cover of $X$ by 
quasi-compact separated open subsets $X_1,\dots, X_n$. Let 
$X^{(i)}=\coprod_{\alpha_0<\dots <\alpha_i} X_{\alpha_0}\cap \dotsm \cap 
X_{\alpha_i}$ for $0\le i< n$ and $X^{(i)}=\emptyset$ otherwise. Using the 
spectral sequences 
\[E_1^{ij}=H^j_c(X^{(-i)}\times_X W_C,\Q_\ell)\Rightarrow H^{i+j}_c(W_C,\Q_\ell),\]
for $W=U$ and $W=V$, one reduces the problem to the known case where $X$ is 
separated. 
\end{proof}

\begin{proof}[Proof of Theorem \ref{t.main}]
Let $\cX$ be an admissible formal model of $X$ with structure morphism 
$a\colon \cX\to \Spf(\cO_K)$. Then
\begin{equation}\label{e.maintriv} 
Ra_{\eta*}\simeq R(a_s\times_s\eta)_*R\Psi_{\cX}
\end{equation}
and 
\[Ra_{\eta!}L\simeq 
R(a_s\times_s\eta)_!R\Psi_{\cX}L
\] 
for $L=\Qlb$ and $L=\IC_{X,\Qlb}[-d]$ by \cite[Theorem A.3.9]{HZ}. Indeed, we 
have 
\[D_\eta Ra_{\eta*} D_X\simeq D_\eta R(a_s\times_s\eta)_*R\Psi_{\cX}D_X\simeq D_\eta R(a_s\times_s\eta)_* D_{\cX_s\times_s \eta}R\Psi_{\cX}\]
by \eqref{e.maintriv} and \cite[Theorem 4.4(1)]{GW} (or \cite[Lemma 
A.3.8(4)]{HZ}). By Theorem \ref{t.psi2}, $R\Psi_\cX L$ is integral and 
$\Id$-inverse integral. It then follows from Theorem \ref{t.push} that 
\[Ra_{\eta*}\Qlb,\quad 
Ra_{\eta!}\Qlb,\quad Ra_{\eta*}\IC_{X,\Qlb}[-d],\quad Ra_{\eta!}\IC_{X,\Qlb}[-d]
\]
are all integral and $\Id$-inverse integral. This means that the eigenvalues 
of $g$ acting on $H^i_{(c)}(X_C,\Q_\ell)$ and $\IH^{i-d}_{(c)}(X_C,\Q_\ell)$ 
are algebraic integers that are units away from $p$ and their $q$-slopes are 
contained in $[0,i]$. 

The remaining bounds for $q$-slopes for $\IH^*_{(c)}(X_C,\Q_\ell)$ follow 
from duality. Indeed, up to replacing $X$ by its normalization, we may assume 
that $X$ is of pure dimension $d$. In this case we have 
$D_{X}\IC_{X,\Q_\ell}\simeq \IC_{X,\Q_\ell}(d)$ by \cite[proof of Theorem 
4.13(3)]{BH}, which induces a $G_K$-equivariant perfect pairing 
$\IH^{i-d}(X_C,\Q_\ell)\times \IH^{d-i}_{c}(X_C,\Q_\ell)\to \Q_\ell(-d)$. 
Thus the $q$-slopes of $\IH^{i-d}_{(c)}(X_C,\Q_\ell)$ are contained in 
$[i-d,d]$. In particular, in the case where $X$ is smooth, the $q$-slopes of 
$H^{i}_{(c)}(X_C,\Q_\ell)$  are contained in $[i-d,d]$. 

For the remaining bounds for $q$-slopes of $H^{i}_{c}(X_C,\Q_\ell)$ for 
general $X$, we reduce to the smooth case as follows, using the same argument 
as in the proof of \cite[Theorem 5.3]{Mieda}. We proceed by induction on the 
dimension $d$ of $X$. The case $X$ empty is trivial. Let $X$ be nonempty. We 
may assume that $X$ is reduced. Let $Y$ be the singular locus of $X$ and let 
$V=X\backslash Y$. Consider the exact sequence 
\[H^i_c(V_C,\Q_\ell)\to H^i_c(X_C,\Q_\ell)\to H^i_c(Y_C,\Q_\ell).\]
By induction hypothesis, the $q$-slopes of $H^{i}_{c}(Y_C,\Q_\ell)$  are 
contained in $[i-d_Y,d_Y]\subseteq [i-d,d]$, where $d_Y=\dim(Y)$. By Lemma 
\ref{l.cont} and the smooth case, the $q$-slopes of $H^i_c(V_C,\Q_\ell)$ are 
contained in $[i-d,d]$. Therefore, the $q$-slopes of $H^i_c(X_C,\Q_\ell)$ are 
contained in $[i-d,d]$. 
\end{proof}

In the smooth case, the first two paragraphs of the above proof hold over a 
local field of any characteristic, which implies the following. 

\begin{prop}\label{p.pos}
Let $X$ be a smooth qcqs rigid space over a local field $K$ of residue field 
$\F_q$ and let $p=\chara(\F_q)$. Let $\ell\neq p$ be a prime number. Let 
$g\in G_K$ be an element projecting to $\Fr_{q}$. For all $i\in \Z$, the 
eigenvalues of $g$ acting on $H^i(X_C,\Q_\ell)$ and $H^i_c(X_C,\Q_\ell)$ are 
algebraic integers that are units away from $p$ and their $q$-slopes are 
contained in $[0,i]\cap [i-d,d]$. 
\end{prop}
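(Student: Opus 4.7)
The plan is to observe that the first two paragraphs of the proof of Theorem \ref{t.main} only use the \emph{smooth} case of Theorem \ref{t.psi2}, namely part (a), and that part (a) makes no hypothesis on $\chara(K)$. So I would simply rerun that argument.

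First, I would choose an admissible formal $\cO_K$-model $\cX$ of $X$ (which exists by Raynaud's theorem since $X$ is qcqs), with structure morphism $a\colon \cX\to \Spf(\cO_K)$. Since $X=\cX_\eta$ is smooth by hypothesis, Theorem \ref{t.psi2}(a) applies directly and gives that $R\Psi_{\cX}\Qlb$ is integral and $\Id$-inverse integral; this is the only place where the characteristic of $K$ could enter, and case (a) sidesteps it because it only relies on Temkin's algebraization of affine admissible formal schemes and on Theorem \ref{t.psialg}, both valid in arbitrary characteristic. Using the identifications $Ra_{\eta*}\Qlb\simeq R(a_s\times_s\eta)_*R\Psi_\cX\Qlb$ and $Ra_{\eta!}\Qlb\simeq R(a_s\times_s\eta)_!R\Psi_\cX\Qlb$ (as in \eqref{e.maintriv} and the Verdier-dual version in the proof of Theorem \ref{t.main}), an application of Theorem \ref{t.push} then yields that both $Ra_{\eta*}\Qlb$ and $Ra_{\eta!}\Qlb$ are integral and $\Id$-inverse integral. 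Translating, the eigenvalues of $g$ on $H^i(X_C,\Q_\ell)$ and $H^i_c(X_C,\Q_\ell)$ are algebraic integers, units away from $p$, with $q$-slopes in $[0,i]$.

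For the complementary bound $[i-d,d]$, I would argue by Poincar\'e duality. Up to passing to connected components we may assume $X$ is smooth of pure dimension $d$, and then $D_X\Q_\ell\simeq \Q_\ell(d)[2d]$ induces a $G_K$-equivariant perfect pairing
\[H^i(X_C,\Q_\ell)\times H^{2d-i}_c(X_C,\Q_\ell)\to \Q_\ell(-d).\]
Combining the $[0,i]$ bound already obtained for $H^{2d-i}_c$ with this pairing gives that the $q$-slopes on $H^i(X_C,\Q_\ell)$ lie in $[i-d,d]$, and symmetrically for $H^i_c$. Intersecting both constraints produces the claimed range $[0,i]\cap[i-d,d]$.

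The main obstacle, such as it is, is purely a sanity check: one must confirm that every ingredient in the first two paragraphs of the proof of Theorem \ref{t.main} (existence of a formal model, the nearby-cycle formula \eqref{e.maintriv} together with its compactly supported counterpart via Verdier duality, Theorem \ref{t.push}, and Poincar\'e duality for smooth rigid spaces) holds in equal characteristic $p$. The only step that genuinely required $\chara(K)=0$ in the original argument was Temkin's resolution of singularities used in Theorem \ref{t.psi2}(b), and this is entirely avoided in the smooth setting because we invoke Theorem \ref{t.psi2}(a) instead.
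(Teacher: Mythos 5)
Your proposal is correct and follows exactly the route the paper intends: the paper's proof of Proposition \ref{p.pos} is the single remark that the first two paragraphs of the proof of Theorem \ref{t.main} hold over a local field of any characteristic in the smooth case, and you have simply unpacked that observation — invoking Theorem \ref{t.psi2}(a) (whose proof via Temkin's algebraization and Theorem \ref{t.psialg} makes no hypothesis on $\chara(K)$), Theorem \ref{t.push}, and Poincar\'e duality $D_X\Q_\ell\simeq\Q_\ell(d)[2d]$ on components of pure dimension $d$. No gap; the reduction to pure dimension by passing to connected components is legitimate since $[i-d',d']\subseteq[i-d,d]$ for $d'\le d$.
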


\begin{lemma}\label{l.wt}
Let $\alpha$ be a $q$-Weil number of weight $w$ that is a unit away from $p$, 
where $q=p^r$. Assume that the $q$-slopes of $\alpha$ are contained in 
$[a,b]$. Then $w\in [2a,2b]$. 
\end{lemma}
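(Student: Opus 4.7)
The plan is to compute the sum of all $q$-slopes of $\alpha$ in two ways—once via the Weil-number property (as a norm computation) and once via localization at primes above $p$—and then observe that since each $q$-slope lies in $[a,b]$, their average does too. Concretely, set $L=\Q(\alpha)$ and $n=[L:\Q]$; the goal is to prove
\begin{equation*}
\sum_{\iota\colon L\to\C_p} v_q(\iota(\alpha))=\frac{nw}{2},
\end{equation*}
whence the average slope equals $w/2\in[a,b]$, giving $w\in[2a,2b]$.

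First I would use the Weil-number hypothesis: for every complex embedding $\sigma\colon L\to\C$ we have $|\sigma(\alpha)|=q^{w/2}$, so
\begin{equation*}
|N_{L/\Q}(\alpha)|=\prod_\sigma|\sigma(\alpha)|=q^{nw/2}=p^{rnw/2}.
\end{equation*}
Since $\alpha$ is a unit at every finite prime of $L$ not above $p$, the rational number $N_{L/\Q}(\alpha)$ is a unit away from $p$, so $N_{L/\Q}(\alpha)=\pm p^{rnw/2}$; in particular $rnw/2\in\Z$, and $v_p(N_{L/\Q}(\alpha))=rnw/2$ for $v_p$ the standard valuation on $\Q$ with $v_p(p)=1$.

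Second, for each prime $\fp$ of $L$ above $p$ with ramification index $e_\fp$, residue degree $f_\fp$, and completion $L_\fp$, exactly $e_\fp f_\fp$ embeddings $\iota\colon L\to\C_p$ factor through $L_\fp$; each such $\iota$ yields the same slope $v_q(\iota(\alpha))=v_\fp(\alpha)/(re_\fp)$, using $v_q=v_p/r$ on $\C_p$ and $v_p|_{L_\fp}=v_\fp/e_\fp$, where $v_\fp\colon L^*\to\Z$ is the normalized valuation. Summing and applying the classical identity $v_p(N_{L/\Q}(\alpha))=\sum_{\fp\mid p}f_\fp v_\fp(\alpha)$,
\begin{equation*}
\sum_{\iota\colon L\to\C_p}v_q(\iota(\alpha))=\sum_{\fp\mid p}e_\fp f_\fp\cdot\frac{v_\fp(\alpha)}{re_\fp}=\frac{1}{r}\sum_{\fp\mid p}f_\fp v_\fp(\alpha)=\frac{v_p(N_{L/\Q}(\alpha))}{r}=\frac{nw}{2}.
\end{equation*}
Dividing by the number $n$ of embeddings gives the claim. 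The argument is essentially bookkeeping between local and global valuations via the norm; there is no substantive obstacle beyond keeping the normalizations of $v_q$, $v_p$, and $v_\fp$ carefully aligned.
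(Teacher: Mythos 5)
Your proof is correct and is exactly the paper's intended argument: the paper says ``This follows immediately from the product formula,'' and your computation via $N_{L/\Q}(\alpha)$ is precisely the product formula unpacked into archimedean and $p$-adic contributions, with the normalizations $v_q = v_p/r$ and $v_p|_{L_\fp} = v_\fp/e_\fp$ handled correctly.
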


\begin{proof}
This follows immediately from the product formula.
\end{proof}

\begin{proof}[Proof of Corollary \ref{c.wt}]
By \cite[Theorem 1.2.5]{HZ}, the eigenvalues are Weil numbers. The weight 
bounds then follow from Theorem \ref{t.main} by Lemma \ref{l.wt}. 
\end{proof}

\begin{lemma}\label{l.qu}
Let $(V,\rho)$ be a nonzero continuous $\ell$-adic representation of $G_K$ 
and $n\ge 0$ the greatest integer such that $\gr_n^M V\neq 0$, where $M$ is 
the monodromy filtration. 
\begin{enumerate}
\item For any $g\in I_K$ such that $\rho(g)$ is unipotent, we have 
    $(\rho(g)-1)^{n+1}=0$.
\item If $g\in G_K$ projects to $\Fr_q\in G_{\F_q}$, then there exist 
    eigenvalues $\alpha$ and $\beta$ of $\rho(g)$  such that 
    $\beta=q^n\alpha$. 
\end{enumerate}
\end{lemma}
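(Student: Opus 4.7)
The plan is to use Grothendieck's monodromy theorem to put the $I_K$-action in standard form and then read off both parts from the defining properties of the monodromy filtration. By that theorem, there exist an open subgroup $I_1 \subseteq I_K$, a nilpotent $N \in \End(V)$, and a homomorphism $t_\ell\colon I_K \to \Z_\ell(1)$ with $\rho(h) = \exp(t_\ell(h) N)$ for every $h \in I_1$. The monodromy filtration $M$ is the unique ascending filtration on $V$ characterized by $N M_i \subseteq M_{i-2}$ and $N^i\colon \gr^M_i V \simto \gr^M_{-i} V$ for $i \ge 0$, and it is automatically $G_K$-stable. The hypothesis on $n$ forces $\gr^M_j V = 0$ for $|j| > n$, so $M_n = V$ and $M_{-n-1} = 0$; in particular $N^{n+1}$ vanishes on $V$.

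For part (a), the first step is to extend the formula $\rho(h) = \exp(t_\ell(h) N)$ from $I_1$ to every $g \in I_K$ with $\rho(g)$ unipotent. If $m \ge 1$ is chosen so that $g^m \in I_1$, then $\rho(g)^m = \exp(m t_\ell(g) N)$, and the uniqueness of the logarithm on unipotent operators yields $\rho(g) = \exp(t_\ell(g) N)$. Setting $X = t_\ell(g) N$, we have $X^{n+1} = 0$; writing $\rho(g) - 1 = \exp(X) - 1 = X \cdot U$ with $U = \sum_{k \ge 0} X^k/(k+1)!$ commuting with $X$ immediately gives $(\rho(g) - 1)^{n+1} = X^{n+1} U^{n+1} = 0$.

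For part (b), the key input is the Frobenius--monodromy commutation $g N g^{-1} = q^{\pm 1} N$ in $\End(V)$, obtained by applying $\log$ to the Weil-group relation $g \tau g^{-1} = \tau^{q^{\pm 1}}$ for a tame generator $\tau$. The filtration $M$ is canonical, hence $g$-stable. Extending scalars to $\Qlb$ if needed, pick a nonzero $g$-eigenvector $v \in \gr^M_n V$ (which exists since $\gr^M_n V \neq 0$) with eigenvalue $\gamma$; because $N^n$ induces an isomorphism $\gr^M_n V \simto \gr^M_{-n} V$, the vector $N^n v$ is nonzero in $\gr^M_{-n} V$, and the commutation gives $g(N^n v) = (g N^n g^{-1})(g v) = q^{\pm n} \gamma \cdot N^n v$. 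Thus $\gamma$ and $q^{\pm n} \gamma$ are both eigenvalues of $g$ on $V$, and relabeling the pair produces $\alpha, \beta$ with $\beta = q^n \alpha$.

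The main bookkeeping obstacle is pinning down the sign in the Frobenius--monodromy commutation, which depends on the arithmetic/geometric Frobenius convention and on the chosen identification $\Z_\ell \simeq \Z_\ell(-1)$ used to view $N$ as an element of $\End(V)$; however, because the conclusion of (b) is symmetric in $\alpha$ and $\beta$, any sign ambiguity is harmlessly absorbed by swapping the labels.
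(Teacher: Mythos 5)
Your proposal is correct and follows essentially the same route as the paper: use Grothendieck's monodromy theorem to write $\rho$ on an open subgroup $I_1$ as $\exp(t_\ell(\cdot)N)$ and read off both parts from $N^{n+1}=0$ and the isomorphism $N^n\colon \gr^M_n V\simto \gr^M_{-n}V$ (equivalently $\gr^M_{-n}V\simeq \gr^M_n V(n)$). For (a) the paper is slightly more direct: rather than extend the exponential formula to all of $I_K$ via uniqueness of unipotent $m$-th roots, it just observes $(\rho(g)^m-1)^{n+1}=0$ for $g^m\in I_1$ and deduces $(\rho(g)-1)^{n+1}=0$ from $\rho(g)^m-1=(\rho(g)-1)\cdot(\text{unit})$; your detour is correct but an extra step. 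For (b) your explicit eigenvector argument via $gNg^{-1}=q^{\pm1}N$ is exactly what is packaged in the paper's one-line remark that $\gr^M_{-n}V\simeq \gr^M_n V(n)$.
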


We call $n$ the \emph{quasi-unipotency index} of $V$. 

\begin{proof}
(a) Let $N\colon V(1)\to V$ be the monodromy operator. Then $N^{n+1}=0$. 
There exists an open subgroup $I_1\subseteq I_K$ such that $\rho(h)=\exp(N 
t_\ell(h))$ for all $h\in I_1$. Here $t_\ell\colon I_K\to \Z_\ell(1)$ is the 
projection. Take an integer $m>0$ such that $g^m\in I_1$. Then 
$(\rho(g)^m-1)^{n+1}=0$. It follows that $(\rho(g)-1)^{n+1}=0$. 

(b) This is obvious, because $\gr_{-n}^M V\simeq \gr_{n}^M V(n)$.
\end{proof}

\begin{proof}[Proof of Corollary \ref{c.qu}]
By \cite[Theorem 3.3.1]{HZ}, there exists an open subgroup $I_1\subseteq 
I_K$, independent of $\ell$, such that for all $g\in I_1$, $(g-1)^N$ acts 
trivially on the cohomology groups, for some integer $N$. By Lemma 
\ref{l.qu}, we may replace $N$ by the quasi-unipotency index of the 
cohomology group and the bounds for $q$-slopes of Theorem \ref{t.main} imply 
the desired bounds for quasi-unipotency indices. 
\end{proof}

\section{Monodromy-pure perverse sheaves}\label{s.4}
Let $K$ be a local field of residue field $\F_q$ and let $\ell\nmid q$ be a 
prime number. In this section, we construct, for projective curves $Y$ over 
$s=\Spec(\F_q)$, monodromy-pure $\Q_\ell$-perverse sheaves on $Y\times_s 
\eta$ with non monodromy-pure cohomology. Here as usual $\eta=\Spec(K)$. The 
idea is to pick a monodromy operator that degenerates on the $E_2$-page of 
the weight spectral sequence. 

We start with a discussion of Galois descent for perverse sheaves. Let $Y$ be 
a scheme of finite type over $s$ and let $\bar s=\Spec(\overline{\F_q})$. We 
let $\Perv_c(Y_{\bar s},G_s,\Q_\ell)$ denote the category of 
$\Q_\ell$-perverse sheaves on $Y_{\bar s}$ equipped with an action of 
$G_s=G_{\F_q}$, compatible with the action of $G_s$ on $Y_{\bar s}$. The 
functor 
\[\Perv_c(Y,\Q_\ell)\to \Perv_c(Y_{\bar s},G_s,\Q_\ell)\]
sending $\cG$ to $(\cG|_{Y_{\bar s}},\rho)$, where $\rho$ is the obvious 
action, is fully faithful. We say that $(\cF,\rho)\in \Perv_c(Y_{\bar 
s},G_s,\Q_\ell)$ \emph{descends} to $Y$ if it is in the essential image of 
the above functor. 

Similarly, the functor 
\begin{equation}\label{e.descent}
\Perv_c(Y\times_s \eta,\Q_\ell)\to 
\Perv_c(Y_{\bar s},G_\eta,\Q_\ell)
\end{equation}
sending $\cP$ to $(\pi_Y^*\cP,\rho)$, where $\rho$ is the obvious action, is 
fully faithful. See for example \cite[Corollary B.2.6]{HZ}. Here $\pi_Y\colon 
Y_{\bar s}\to Y\times_s \eta $ and $\Perv_c(Y_{\bar s},G_\eta,\Q_\ell)$ 
denotes the category of $\Q_\ell$-perverse sheaves on $Y_{\bar s}$ equipped 
with an action of $G_\eta=G_{K}$, compatible with the action of $G_\eta$ on 
$Y_{\bar s}$ via the reduction homomorphism $r\colon G_\eta\to G_s$. For an 
object $(\cF,\rho)\in \Perv_c(Y_{\bar s},G_\eta,\Q_\ell)$, we say that the 
action $\rho|_{I_\eta}$ of the inertia group $I_\eta=I_K$ is 
\emph{continuous} if the map $\rho|_{I_\eta}\colon I_\eta\to \End(\cF)$ is 
continuous, where the finite-dimensional $\Q_\ell$-vector space $\End(\cF)$ 
is equipped with the usual topology. 

\begin{lemma}\label{l.descent}
Let $(\cF,\rho)\in \Perv_c(Y_{\bar s},G_\eta,\Q_\ell)$. The following 
conditions are equivalent. 
\begin{enumerate}
\item $(\cF,\rho)$ belongs to the essential image of \eqref{e.descent}. 
\item $\rho|_{I_\eta}$ is continuous and for every continuous section 
    $\sigma\colon G_s\to G_\eta$ of $r$, $(\cF,\sigma^*\rho)\in 
    \Perv_c(Y_{\bar s},G_s,\Q_\ell)$ descends to $Y$.
\item $\rho|_{I_\eta}$ is continuous and for some continuous section 
    $\sigma\colon G_s\to G_\eta$ of $r$, $(\cF,\sigma^*\rho)\in 
    \Perv_c(Y_{\bar s},G_s,\Q_\ell)$ descends to $Y$.
\end{enumerate}
Here $\sigma^*\rho$ denotes the restriction of $\rho$ to $G_s$ via $\sigma$. 
\end{lemma}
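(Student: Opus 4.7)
The plan is to reduce all three conditions to a single statement about continuity of the representation $\rho\colon G_\eta\to \Aut(\cF)$, where $\Aut(\cF)\subseteq \End(\cF)$ inherits its topology from the finite-dimensional $\Q_\ell$-vector space $\End(\cF)$. Via the fully faithful embedding \eqref{e.descent} and the description of sheaves on $Y\times_s\eta$ underlying \cite[Corollary B.2.5]{HZ}, the essential image of \eqref{e.descent} consists precisely of those $(\cF,\rho)$ for which $\rho$ is continuous in this sense. Once this characterization is established, all three conditions become statements about continuous group homomorphisms.

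For (a)$\Rightarrow$(b): if $(\cF,\rho)\simeq \pi_Y^*\cP$ for some $\cP\in \Perv_c(Y\times_s\eta,\Q_\ell)$, then continuity of $\rho$, hence of $\rho|_{I_\eta}$, holds by definition. A continuous section $\sigma\colon G_s\to G_\eta$ of $r$ determines a morphism of topoi $\phi_\sigma\colon Y\to Y\times_s\eta$ through the identification $Y=Y\times_s s$, and $\phi_\sigma^*\cP$ is a perverse sheaf on $Y$ whose base change to $Y_{\bar s}$ recovers $(\cF,\sigma^*\rho)$. Thus $(\cF,\sigma^*\rho)$ descends to $Y$. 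The implication (b)$\Rightarrow$(c) is immediate.

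For (c)$\Rightarrow$(a): fix a section $\sigma$ as in (c). It provides a topological splitting of the exact sequence $1\to I_\eta\to G_\eta\to G_s\to 1$, so every $g\in G_\eta$ admits a unique decomposition $g=h\cdot\sigma(\bar g)$ with $h\in I_\eta$ and $\bar g=r(g)$, and the maps $g\mapsto h$ and $g\mapsto \bar g$ are continuous. Consequently $\rho$ is continuous if and only if both $\rho|_{I_\eta}$ and $\rho\circ\sigma$ are. The first is continuous by hypothesis; the second is continuous because, by (c), $(\cF,\sigma^*\rho)$ arises from an actual perverse sheaf on $Y$, whose $G_s$-action on $\End$ of its base change to $Y_{\bar s}$ is continuous by the very definition of $\Q_\ell$-sheaves on $Y$. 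Hence $\rho$ is continuous and $(\cF,\rho)$ belongs to the essential image of \eqref{e.descent}.

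The main obstacle is the initial characterization of the essential image of \eqref{e.descent} as the continuous representations: this requires a careful unwinding of the definition of $\Q_\ell$-sheaves on the fiber product topos $Y\times_s\eta$, using the fully faithfulness noted above. Once this is in place, the remaining argument is purely formal, relying on the standard fact that a homomorphism out of a topologically split extension of topological groups is continuous if and only if its restrictions to the normal subgroup and to the splitting are both continuous.
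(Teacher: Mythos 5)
Your high-level plan — reduce everything to a single continuity characterization of the essential image of \eqref{e.descent}, then use the topological splitting $G_\eta\simeq I_\eta\rtimes\sigma(G_s)$ to make the equivalences formal — is plausible, but it leaves the real content of the lemma unproved. You acknowledge this yourself: the ``initial characterization of the essential image of \eqref{e.descent} as the continuous representations'' is flagged as ``the main obstacle'', and then it is asserted rather than established. That characterization \emph{is} the lemma: proving it requires exhibiting a $G_\eta$-stable $\Z_\ell$-lattice and reducing to torsion coefficients, which is exactly what the paper's proof of (c)$\implies$(a) does. Starting from a lattice $\cF_0$ stable under $\sigma(G_s)$, one uses continuity of $\rho|_{I_\eta}$ to find an open subgroup of $I_\eta$ stabilizing $\cF_0$, replaces $\cF_0$ by the finite sum $\sum_i\rho_{g_i}(\cF_0)$ over coset representatives to get a fully $G_\eta$-stable lattice, and then on each $\Z/\ell^n$ quotient the inertia action factors through a finite quotient so that descent is immediate. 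Without this argument, the implication (c)$\implies$(a) in your write-up is circular: you invoke the continuity characterization to conclude descent, but that characterization is not a formal consequence of full faithfulness of \eqref{e.descent} or of unwinding definitions.

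There is also a technical imprecision in your setup. For $g\in G_\eta$ not in $I_\eta$, $\rho(g)$ is not an element of $\Aut(\cF)$: it is an isomorphism from $\cF$ to a Frobenius twist of $\cF$, because $g$ acts nontrivially on $Y_{\bar s}$. So ``continuity of $\rho\colon G_\eta\to\Aut(\cF)$ in the topology of $\End(\cF)$'' does not literally parse. This is why the paper works with $\rho|_{I_\eta}$ (which does land in $\Aut(\cF)$) and handles the Frobenius direction separately through the hypothesis that $(\cF,\sigma^*\rho)$ descends to $Y$. Your ``topologically split extension'' reduction conflates the two directions in a way that would need to be repaired. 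Finally, your argument for continuity of $\rho|_{I_\eta}$ in (a)$\implies$(b) is circular for the same reason (it presupposes the characterization); the paper instead observes directly that $I_\eta$ acts continuously on $\Hom(\pi_Y^*\cP,\pi_Y^*\cP)$ because this $\Hom$ is a continuous $G_\eta$-module, and $\rho|_{I_\eta}$ is a restriction of that action.
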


\begin{proof}
(a)$\implies$(b). Clearly $(\pi_Y^*\cP,\sigma^*\rho)$ descends to the 
perverse sheaf $\sigma^*\cP$ on $Y$. Let $a\colon Y\to s$ be the structure 
morphism. The sheaf $R^0(a\times_s \eta)_*R\cHom(r^*\sigma^*\cP,\cP)$ on 
$\eta$ corresponds to a continuous action $\alpha\colon G_\eta\times E\to E$, 
where $E=\Hom(\pi_Y^*\cP,\pi_Y^*\cP)$. The inertia $I_\eta$ acts trivially on 
the first $\pi_Y^*\cP$ and $I_\eta$ acts on the second $\pi_Y^*\cP$ via 
$\rho$. The map $\rho|_{I_\eta}$ can be identified with 
$\alpha|_{I_\eta\times \{\id_{\pi_Y^*\cP}\}}$, which is continuous. 

(b)$\implies$(c). Trivial.

(c)$\implies$(a). $(\cF,\sigma^*\rho)\in \Perv_c(Y_{\bar s},G_s,\Q_\ell)$ 
descends to a $\Q_\ell$-perverse sheaf $\cG$ on $Y$. Choose a torsion-free 
integral model of $\cG$. This corresponds to a torsion-free integral model of 
$\cF$, namely a torsion-free $\Z_\ell$-perverse sheaf $\cF_0$ on $Y$ 
satisfying $\cF_0\otimes_{\Z_\ell} \Q_\ell \simeq \cF$, stable under 
$\sigma(G_s)$. Since $\rho|_{I_\eta}$ is continuous, $\cF_0$ is also stable 
under an open subgroup of $I_\eta$. Thus the stabilizer $H$ of $\cF_0$ is an 
open subgroup of $G_\eta$ containing $\sigma(G_s)$. Write 
$G_\eta/H=\{g_1H,\dots,g_mH\}$ with $g_i\in I_\eta$. Then 
$\sum_{i}\rho_{g_i}(\cF_0)$ is stable under $\rho$. Thus it suffices to prove 
the analogue of (c)$\implies$(a) for torsion-free $\Z_\ell$-perverse sheaves. 
This case reduces immediately to the case of $\Z/\ell^n$-perverse sheaves. In 
the latter case, the action of $I_\eta$ factorizes through a finite quotient 
by an open subgroup. Thus, up to replacing $\eta$ by a finite extension, we 
may assume that $I_\eta$ acts trivially. In this case, the assertion is 
trivial. 
\end{proof}

With the help of Lemma \ref{l.descent}, we can modify the monodromy operator 
of a perverse sheaf on $Y\times_s \eta$ using the same formula as in the 
theory of $\ell$-adic representations of $G_\eta$. 

\begin{construction}\label{c.twist}
Let $Y$ be a scheme of finite type over $\F_q$. Let $\cP\in \Perv_c(Y\times_s 
\eta,\Q_\ell)$ and let $N\colon \cP(1)\to \cP$ be a nilpotent operator. We 
construct $\cP\langle N\rangle \in \Perv_c(Y\times_s \eta,\Q_\ell)$ 
satisfying $\pi_Y^*(\cP\langle N\rangle) \simeq \pi_Y^*\cP$, as follows. Let 
$(\pi_Y^*\cP,\rho)$ be the image of $\cP$ under the functor 
\eqref{e.descent}. For any continuous section $\sigma\colon G_s\to G_\eta$ of 
$r$, we define an action $\rho\langle N \rangle_\sigma$ of $G_\eta$ on 
$\pi_Y^*\cP$ by the formula 
\[\rho\langle N\rangle_\sigma (h\sigma(g))=\exp(Nt_\ell(h))\rho(h\sigma(g))\]
for $g\in G_s$, $h\in I_\eta$, where $t_\ell\colon I_K\to \Z_\ell(1)$ denotes 
the projection. Then $(\pi_Y^*\cP,\rho\langle N\rangle_\sigma)$ satisfies 
condition (c) of Lemma \ref{l.descent}. Indeed, the continuity of 
$\rho\langle N\rangle_\sigma |_{I_\eta}$ is clear and  $\sigma^*(\rho\langle 
N\rangle_\sigma)=\sigma^*\rho$. If $\sigma'\colon G_s\to G_\eta$ is another 
section of $r$ satisfying $\sigma'(\Fr_q)=h_0\sigma(\Fr_q)$, then we have an 
isomorphism 
\[\exp\left(\frac{1}{q^{-1}-1}Nt_\ell(h_0)\right)\colon (\pi_Y^*\cP,\rho\langle 
N\rangle_{\sigma})\simto (\pi_Y^*\cP,\rho\langle N\rangle_{\sigma'}).
\] 
Thus $(\pi_Y^*\cP,\rho\langle N \rangle_\sigma)$ does not depend on $\sigma$ 
up to isomorphism. By Lemma \ref{l.descent}, $(\pi_Y^*\cP,\rho\langle N 
\rangle_\sigma)$ descends to an object of $ \Perv_c(Y\times_s \eta,\Q_\ell)$, 
which we denote by $\cP\langle N \rangle $. 

Let $N_\cP$ denote the monodromy operator of $\cP$. Then $N_\cP$ and $N$ 
commute with each other and both descend to $\cP\langle N \rangle$. We have 
$N_{\cP\langle N\rangle}=N_\cP+N$. 
\end{construction}

We can now prove Theorem \ref{t.ex}. Let us recall the statement, which holds 
in fact without assuming $\chara(K)=0$. 

\begin{theorem}
For every projective curve $Y$ over $\F_q$, there exists a monodromy-pure 
perverse sheaf $\cP\in \Perv_c(Y\times_s \eta,\Q_\ell)$ such that 
$R(a\times_s \eta)_*\cP$ is not monodromy-pure. Here $a\colon Y\to s$ is the 
structure morphism. 
\end{theorem}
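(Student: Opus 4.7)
The plan is to construct the perverse sheaf $\cP$ by applying Construction \ref{c.twist} to a carefully chosen pair $(\cP_0, N)$. The hint in the paper---that the monodromy operator should ``degenerate on the $E_2$-page of the weight spectral sequence''---suggests choosing $N$ so that the weight spectral sequence
\[
E_1^{p,q} = H^{p+q}(Y_{\bar s},\, \gr^M_{-p}\cP) \Rightarrow H^{p+q}(R(a\times_s\eta)_*\cP)
\]
converges, via nontrivial $d_1$-differentials, to an $E_\infty = E_2$ page whose induced filtration on cohomology differs from the actual monodromy filtration of the induced nilpotent $N_{H^*}$. The key point is that although $d_r = 0$ for $r\ge 2$ by weight reasons (giving automatic $E_2$-degeneration), the induced filtration on $H^*(R(a\times_s\eta)_*\cP)$ need not satisfy the Lefschetz primitivity condition, in which case it is strictly finer than the actual monodromy filtration and the latter has graded pieces of ``wrong'' weight.

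Concretely, I will start with $\cP_0 = \cQ\oplus\cQ(-1)$ for a pure perverse sheaf $\cQ$ of weight $w-1$ on $Y$, so that $\cP_0$ is pulled back from $Y$ with trivial monodromy. Applying Construction \ref{c.twist} with the Lefschetz identification $N\colon \cQ(-1)(1)=\cQ\xrightarrow{\mathrm{id}}\cQ$ into the first summand yields $\cP=\cP_0\langle N\rangle$ monodromy-pure of weight $w$, with $\gr^M_{-1}\cP = \cQ$ and $\gr^M_1\cP = \cQ(-1)$. For this naive direct-sum choice, however, the induced $N$ on cohomology is again a Lefschetz isomorphism $H^k(\cQ(-1))(1)\simeq H^k(\cQ)$ between the Tate-twisted summands (since $H^k(\cQ(-1))=H^k(\cQ)(-1)$), so $R(a\times_s\eta)_*\cP$ is also monodromy-pure and no counterexample is obtained. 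To produce a genuine counterexample I will need to refine the construction, most substantively by using a non-trivial extension in $\Perv_c(Y\times_s\eta,\Q_\ell)$ in place of the direct sum, coming from a nonzero class in $H^1(G_\eta, \Hom_{Y_{\bar s}}(\cQ(-1),\cQ))$. For $K$ a $p$-adic local field this cohomology group is nonzero (by local class field theory / Tate duality), even though the corresponding $\Ext^1$ over $Y$ itself vanishes for weight reasons---contributing the nontrivial $d_1$ that asymmetrically kills cohomology classes.

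The main obstacle will be to exhibit, for an arbitrary projective curve $Y$ over $\F_q$, an explicit such $\cP$ and a cohomological degree $k$ in which the Lefschetz primitivity isomorphism
\[
N_{H^k}^j\colon \gr^{M^{\mathrm{mon}}}_j H^k(R(a\times_s\eta)_*\cP) \xrightarrow{\sim} \gr^{M^{\mathrm{mon}}}_{-j} H^k(R(a\times_s\eta)_*\cP)(-j)
\]
fails. Here $M^{\mathrm{mon}}$ denotes the true monodromy filtration of $N_{H^k}$, which is strictly coarser than the $M^\cP$-induced filtration produced by the degenerate weight spectral sequence. When primitivity fails, the graded pieces of $M^{\mathrm{mon}}$ acquire weights inconsistent with monodromy-purity for any single weight $w'$, and consequently $R(a\times_s\eta)_*\cP$ is not monodromy-pure. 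Verifying this asymmetry amounts to a careful cohomological analysis of $\cQ$ together with the chosen extension class in $H^1(G_\eta, \Hom_{Y_{\bar s}}(\cQ(-1),\cQ))$.
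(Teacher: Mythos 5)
Your proposal stops at the point where the actual work begins, and the one concrete refinement it offers is provably incapable of producing a counterexample. An extension $0\to\cQ\to\cP\to\cQ(-1)\to 0$ in $\Perv_c(Y\times_s\eta,\Q_\ell)$ with class coming from $H^1(G_\eta,\Hom_{Y_{\bar s}}(\cQ(-1),\cQ))$ is exactly the kind of object Construction \ref{c.twist} already produces from $\cQ\oplus\cQ(-1)$: the inertia part of such a class is $N\otimes t_\ell$ for some $N\in\Hom_{Y_{\bar s}}(\cQ(-1)(1),\cQ)$, and for $\cP$ to be monodromy-pure this $N$ must induce an isomorphism $\gr^M_1\cP(1)\simeq\gr^M_{-1}\cP$. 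But then the monodromy spectral sequence for $R(a\times_s\eta)_*\cP$ has nonzero columns only at $i=\pm 1$: every $d_1$ has zero source or target, and $d_2\colon H^{j-1}(Y_{\bar s},\cQ(-1))\to H^{j}(Y_{\bar s},\cQ)$ vanishes because $Y$ is proper and $\cQ$ is pure, so source and target are pure of weights differing by one (inertia acts trivially on both, so the Frobenius-eigenvalue argument applies). Hence the sequence degenerates at $E_1$, the induced operator on the associated graded of each $H^k$ is $H^k$ of the isomorphism $N$, and the induced two-step filtration is the monodromy filtration of $N_{H^k}$, with graded pieces pure of the correct weights. So $R(a\times_s\eta)_*\cP$ is monodromy-pure for \emph{every} choice of $\cQ$ and of extension class; the obstruction you defer to the end ("exhibit an explicit such $\cP$") is not a technical verification but a structural impossibility of the two-step shape. (Your description of the failure mode is also off: in a genuine counterexample the induced monodromy can simply be zero while the weight filtration is nontrivial, rather than a "strictly finer" induced filtration violating primitivity.)

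What is missing is the need for a monodromy filtration with (at least) three steps, so that adjacent columns exist and nonzero $d_1$-differentials can occur, and for those $d_1$'s to come from a nontrivial extension over $Y$ itself rather than from Galois cohomology of $G_\eta$. The paper first reduces to $Y$ connected smooth with two rational points $P_1\neq P_2$ (using that finite pushforward preserves monodromy-purity, a reduction your plan for "an arbitrary projective curve" never addresses), then builds a perverse sheaf $\cG$ on $Y$ with graded pieces $i_*\Q_\ell$, $\Q_{\ell,Y}[1]$, $i_*\Q_\ell(-1)$ glued by the cycle class of $\{P_1,P_2\}$, and finally applies Construction \ref{c.twist} with the nilpotent $N$ acting by $-\id$ at $P_1$ and $+\id$ at $P_2$. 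The cycle-class gluing makes $d_1^{-1,1}$ the summation map and $d_1^{0,-1}$ the diagonal, so this $N$ is an isomorphism between the extreme graded pieces of $\cP$ (whence $\cP$ is monodromy-pure of weight $1$) but induces the zero map on $E_2$; consequently the monodromy operator on $R^0(a\times_s\eta)_*\cP$ is zero while that group has a weight-$0$ subobject and a weight-$2$ quotient, so it is not monodromy-pure. The sign trick and the geometric extension over $Y$ are precisely the ingredients your proposal lacks.
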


\begin{proof}
Note first that finite pushforward preserves monodromy-pure perverse sheaves. 
See for example \cite[Lemma 2.7.5]{HZ}. Thus, up to replacing $\F_q$ by a 
finite extension and $Y$ by a connected component of its normalization, we 
may assume that $Y$ is connected smooth and has at least two rational points, 
$P_1\neq P_2$. Let $i\colon V=\{P_1,P_2\}\to Y$ be the closed immersion and 
$j\colon Y\backslash V\to Y$ the complementary open immersion. The short 
exact sequence 
\[0\to j_!\Q_\ell \to \Q_{\ell,Y} \to i_* \Q_\ell \to 0\] 
of sheaves on $Y$ induces a short exact sequence
\[0\to i_*\Q_\ell \to j_!\Q_\ell[1] \to \Q_{\ell,Y}[1]\to 0\]
of perverse sheaves on $Y$. Since $\Ext^k_Y(i_*\Q_\ell(-1),i_*\Q_\ell)=0$ for 
all $k\in \Z$, we have  
\[\Ext^1_Y(i_*\Q_\ell(-1),j_!\Q_\ell[1])\simeq 
\Ext^1_Y(i_*\Q_\ell(-1),\Q_{\ell,Y}[1])\simeq H^2_V(Y,\Q_\ell(1)).
\] 
Thus the cycle class $\cl([V])\in H^2_V(Y,\Q_\ell(1))$ defines a perverse 
sheaf $\cG$ on $Y$, extension of $i_*\Q_\ell(-1)$ by $j_!\Q_\ell[1]$. Then 
$\cG$ is equipped with an increasing filtration $F$ with graded pieces 
\[\gr^F_{-1}\cG=i_*\Q_\ell,\quad \gr^F_0\cG=\Q_{\ell,Y}[1],\quad \gr^F_{1}\cG=i_*\Q_\ell(-1),\]
and $\gr^F_k\cG=0$ for $\lvert k\rvert>1$. Note that $\gr^F_k\cG$ is pure of 
weight $k+1$ for all $k\in \Z$. 

Now consider the map
\[N\colon \cG(1)\to \gr^F_1\cG(1) 
\xrightarrow{\phi}\gr^F_{-1}\cG\to \cG,
\] 
where $\phi\in \End(i_*\Q_\ell)\simeq \End(i_{1*}\Q_\ell) \oplus 
\End(i_{2*}\Q_\ell)$ is given by 
$(-\id_{i_{1*}\Q_\ell},\id_{i_{2*}\Q_\ell})$. Here $i_k\colon \{P_k\}\to Y$ 
is the inclusion for $k=1,2$. We have $N^2=0$. Let $p_Y\colon Y\times_s \eta 
\to Y$ be the projection. Applying Construction \ref{c.twist} to 
$p_Y^*N\colon p_Y^*\cG(1)\to p_Y^*\cG$, which we still denote by $N$, we get 
$\cP=(p_Y^*\cG)\langle N\rangle\in \Perv_c(Y\times_s \eta,\Q_\ell)$. We have 
$N_\cP=N$ and the monodromy filtration $M$ on $\cP$ coincides with the 
filtration $F$ on $\cG$ when restricted to $\pi_Y^*\cP\simeq \cG|_{Y_{\bar 
s}}$. Moreover, $\gr^M_j\cP\simeq p_Y^*\gr^F_j\cG$ for all $j\in \Z$. In 
particular, $\cP$ is monodromy-pure of weight $1$. Let $a\colon Y\to s$ be 
the structure morphism. In the monodromy spectral sequence 
\[E_1^{ij}=R^{i+j}(a\times_s \eta)_* \gr_{-i}^M \cP\Rightarrow R^{i+j}(a\times_s \eta)_*\cP,\]
$d_1^{-1,1}\colon E_1^{-1,1}\to E_1^{0,1}$ can be identified with the 
summation map $\Q_\ell(-1)\oplus\Q_\ell(-1)\to \Q_\ell(-1)$ and 
$d_1^{0,-1}\colon E_1^{0,-1}\to E_1^{1,-1}$ can be identified with the 
diagonal map $\Q_\ell\to \Q_\ell\oplus \Q_\ell$. The monodromy operator 
$N\colon E_1^{-1,1}(1)\to E_1^{1,-1}$ is given by the matrix 
$\begin{pmatrix}-1&0\\0&1\end{pmatrix}$. It follows that $N\colon 
E_2^{-1,1}(1)\to E_2^{1,-1}$ is the zero map and the monodromy operator on 
$R^{0}(a\times_s \eta)_*\cP$ is zero. However, $R^{0}(a\times_s \eta)_*\cP$ 
has $E_2^{1,-1}\simeq \Q_\ell$ as a subobject and $E_2^{-1,1}\simeq 
\Q_\ell(-1)$ as a quotient. Therefore, $R^{0}(a\times_s \eta)_*\cP$ is not 
monodromy-pure.  
\end{proof}

We sketch another counterexample to Question \ref{q.monodromy}, obtained by 
modifying the monodromy operator of nearby cycles. 

\begin{example}
Let $X$ be a strictly semistable scheme over $\Spec(\cO_K)$, projective of 
relative dimension $1$. Assume that the special fiber $Y=X_{s}$ consists of 
$n\ge 2$ components meeting at $n$ rational points $P_1,\dots,P_n$ such that 
the dual graph is an $n$-cycle. The case $n=2$ is shown in the picture below. 
\begin{center}
\begin{tikzpicture}
    \draw (-1.4,0) -- (1.4,0);
    
    \draw[domain=-1.4:1.4,samples=100] plot (\x,{0.5*\x*\x - 0.5});
    
    \coordinate (A) at (-1,0);
    \coordinate (B) at (1,0);
    \fill[black] (A) circle (1pt) node[below] {$P_1$};
    \fill[black] (B) circle (1pt) node[below] {$P_2$};
    
\end{tikzpicture}
\end{center}
Let $\cP=R\Psi^\alg_X \Q_\ell[1]\in \Perv_c(Y\times_s \eta,\Q_\ell)$. Then 
$\cP$ is monodromy-pure of weight $1$, with graded pieces of the monodromy 
filtration $M$ given by 
\[\gr^M_{-1}\cP=i_*\Q_\ell,\quad \gr^M_0\cP=f_*\Q_{\ell}[1],\quad \gr^M_{1}\cP=i_*\Q_\ell(-1),\]
and $\gr^M_k\cP=0$ for $\lvert k\rvert>1$. Here $i\colon \{P_1,\dots,P_n\}\to 
Y$ is the closed immersion, $f\colon \tilde Y\to Y$ is the normalization of 
$Y$, and we dropped $p_Y^*$ from the notation, where $p_Y\colon Y\times_s 
\eta\to Y$ denotes the projection. 

Choose $\alpha_1,\dots,\alpha_n\in \Q_\ell^\times$ such that 
$\alpha_1+\dots+\alpha_n=0$. Consider the map 
\[N'\colon \cP(1)\to \gr^M_1\cP(1) 
\xrightarrow{\phi}\gr^M_{-1}\cP\to \cP,
\] 
where $\phi\in \End(i_*\Q_\ell)\simeq \bigoplus_{k=1}^n\End(i_{k*}\Q_\ell)$ 
is given by $(\alpha_k\id_{i_{k*}\Q_\ell})_k$. Here $i_k\colon \{P_k\}\to Y$ 
is the inclusion for $1\le k\le n$. Let $\cP'=\cP\langle N'-N_\cP\rangle\in 
\Perv_c(Y\times_s \eta,\Q_\ell)$ (Construction \ref{c.twist}). Then 
$N_{\cP'}=N'$ and the monodromy filtrations on $\cP'$ and $\cP$, both denoted 
by $M$, coincide on $\pi_Y^*\cP$. Moreover,  $\gr^{M}_k \cP'\simeq \gr^M_k 
\cP$ for all $k\in \Z$. In particular, $\cP'$ is monodromy-pure of weight 
$1$. Let $a\colon Y\to s$ be the structure morphism. In the monodromy 
spectral sequence 
\[E_1^{ij}=R^{i+j}(a\times_s \eta)_* \gr_{-i}^{M} \cP'\Rightarrow R^{i+j}(a\times_s \eta)_*\cP',\]
$E_2^{-1,1}\subseteq E_1^{-1,1}$ can be identified with the diagonal 
embedding $\Q_\ell(-1)\subseteq\Q_\ell(-1)^n$ and the quotient map 
$E_1^{1,-1}\to E_2^{1,-1}$ can be identified with the summation map $ 
\Q_\ell^n\to \Q_\ell$. Since $N'\colon E_1^{-1,1}(1)\to E_1^{1,-1}$ is given 
by the diagonal matrix $\diag(\alpha_1,\dots,\alpha_n)$,  $N'\colon 
E_2^{-1,1}(1)\to E_2^{1,-1}$ is the zero map. Thus $R^{0}(a\times_s 
\eta)_*\cP'$ is not monodromy-pure.
\end{example}

\begin{bibdiv}
\begin{biblist}
\bib{BBD}{article}{
   author={Be{\u\i}linson, A. A.},
   author={Bernstein, J.},
   author={Deligne, P.},
   author={Gabber, O.},
   title={Faisceaux pervers},
   language={French},
   conference={
      title={Analysis and topology on singular spaces, I},
      address={Luminy},
      date={1981},
   },
   book={
      series={Ast\'erisque},
      volume={100},
      publisher={Soc. Math. France},
      place={Paris},
   },
   date={2018},
   review={\MR{751966 (86g:32015)}},
}

\bib{BH}{article}{
   author={Bhatt, Bhargav},
   author={Hansen, David},
   title={The six functors for Zariski-constructible sheaves in rigid
   geometry},
   journal={Compos. Math.},
   volume={158},
   date={2022},
   number={2},
   pages={437--482},
   issn={0010-437X},
   review={\MR{4413751}},
   doi={10.1112/s0010437x22007291},
} 

\bib{dJ}{article}{
   author={de Jong, A. Johan},
   title={Smoothness, semi-stability and alterations},
   journal={Inst. Hautes \'Etudes Sci. Publ. Math.},
   number={83},
   date={1996},
   pages={51--93},
   issn={0073-8301},
   review={\MR{1423020 (98e:14011)}},
}

\bib{HodgeIII}{article}{
   author={Deligne, Pierre},
   title={Th\'eorie de Hodge. III},
   language={French},
   journal={Inst. Hautes \'Etudes Sci. Publ. Math.},
   number={44},
   date={1974},
   pages={5--77},
   issn={0073-8301},
   review={\MR{0498552}},
}

\bib{WeilII}{article}{
   author={Deligne, Pierre},
   title={La conjecture de Weil. II},
   language={French},
   journal={Inst. Hautes \'Etudes Sci. Publ. Math.},
   number={52},
   date={1980},
   pages={137--252},
   issn={0073-8301},
   review={\MR{601520 (83c:14017)}},
}

\bib{SGA7II}{book}{
   title={Groupes de monodromie en g\'eom\'etrie alg\'ebrique. II},
   author={Deligne, Pierre},
   author={Katz, Nicholas},
   language={French},
   series={Lecture Notes in Mathematics, Vol. 340},
   note={S\'eminaire de G\'eom\'etrie Alg\'ebrique du Bois-Marie 1967--1969 (SGA 7
   II)},
   publisher={Springer-Verlag, Berlin-New York},
   date={1973},
   pages={x+438},
   review={\MR{0354657}},
}

\bib{Esnault}{article}{
   author={Esnault, H\'el\`ene},
   title={Deligne's integrality theorem in unequal characteristic and
   rational points over finite fields},
   note={With an appendix by Pierre Deligne and Esnault},
   journal={Ann. of Math. (2)},
   volume={164},
   date={2006},
   number={2},
   pages={715--730},
   issn={0003-486X},
   review={\MR{2247971}},
   doi={10.4007/annals.2006.164.715},
}

\bib{GW}{article}{
   author={Gaisin, Ildar},
   author={Welliaveetil, John},
   title={Constructibility and reflexivity in non-Archimedean geometry},
   journal={Int. Math. Res. Not. IMRN},
   date={2021},
   number={5},
   pages={3438--3511},
   issn={1073-7928},
   review={\MR{4227576}},
   doi={10.1093/imrn/rnz247},
}

\bib{HZ}{article}{ 
   author={Hansen, David},
   author={Zavyalov, Bogdan},
   title={Arithmetic properties of $\ell$-adic \'etale cohomology and nearby cycles of rigid analytic spaces},
   note={arXiv:2301.01800v2},
   year={2025},
}

\bib{HuberA}{article}{
   author={Huber, Annette},
   title={Mixed perverse sheaves for schemes over number fields},
   journal={Compositio Math.},
   volume={108},
   date={1997},
   number={1},
   pages={107--121},
   issn={0010-437X},
   review={\MR{1458759}},
   doi={10.1023/A:1000273606373},
}

\bib{Huber}{book}{
   author={Huber, Roland},
   title={\'Etale cohomology of rigid analytic varieties and adic spaces},
   series={Aspects of Mathematics},
   volume={E30},
   publisher={Friedr. Vieweg \& Sohn, Braunschweig},
   date={1996},
   pages={x+450},
   isbn={3-528-06794-2},
   review={\MR{1734903}},
   doi={10.1007/978-3-663-09991-8},
}

\bib{HuberComp}{article}{
   author={Huber, Roland},
   title={A comparison theorem for $l$-adic cohomology},
   journal={Compositio Math.},
   volume={112},
   date={1998},
   number={2},
   pages={217--235},
   issn={0010-437X},
   review={\MR{1626021}},
   doi={10.1023/A:1000345530725},
}

\bib{IllTou}{article}{
   author={Illusie, Luc},
   title={Grothendieck and vanishing cycles},
   language={English, with English and French summaries},
   journal={Ann. Fac. Sci. Toulouse Math. (6)},
   volume={30},
   date={2021},
   number={1},
   pages={83--115},
   issn={0240-2963},
   review={\MR{4269137}},
   doi={10.5802/afst.1667},
}

\bib{comp}{article}{
   author={Lu, Qing},
   author={Zheng, Weizhe},
   title={Compatible systems and ramification},
   journal={Compos. Math.},
   volume={155},
   date={2019},
   number={12},
   pages={2334--2353},
   issn={0010-437X},
   review={\MR{4023725}},
   doi={10.1112/s0010437x19007619},
}

\bib{Mieda}{article}{
   author={Mieda, Yoichi},
   title={On the action of the Weil group on the $l$-adic cohomology of
   rigid spaces over local fields},
   journal={Int. Math. Res. Not.},
   date={2006},
   pages={Art. ID 16429, 11},
   issn={1073-7928},
   review={\MR{2211156}},
   doi={10.1155/IMRN/2006/16429},
}

\bib{Morel}{article}{
   author={Morel, Sophie},
   title={Mixed $\ell$-adic complexes for schemes over number fields},
   journal={Doc. Math.},
   volume={30},
   date={2025},
   number={1},
   pages={105--181},
   issn={1431-0635},
   review={\MR{4855495}},
   doi={10.4171/dm/990},
}

\bib{ST}{article}{
   author={Serre, Jean-Pierre},
   author={Tate, John},
   title={Good reduction of abelian varieties},
   journal={Ann. of Math. (2)},
   volume={88},
   date={1968},
   pages={492--517},
   issn={0003-486X},
   review={\MR{0236190}},
   doi={10.2307/1970722},
}

\bib{SZ}{article}{
   author={Sun, Shenghao},
   author={Zheng, Weizhe},
   title={Parity and symmetry in intersection and ordinary cohomology},
   journal={Algebra Number Theory},
   volume={10},
   date={2016},
   number={2},
   pages={235--307},
   issn={1937-0652},
   review={\MR{3477743}},
   doi={10.2140/ant.2016.10.235},
}

\bib{Temkin12}{article}{
   author={Temkin, Michael},
   title={Functorial desingularization of quasi-excellent schemes in
   characteristic zero: the nonembedded case},
   journal={Duke Math. J.},
   volume={161},
   date={2012},
   number={11},
   pages={2207--2254},
   issn={0012-7094},
   review={\MR{2957701}},
   doi={10.1215/00127094-1699539},
}

\bib{Temkin17}{article}{
   author={Temkin, Michael},
   title={Altered local uniformization of Berkovich spaces},
   journal={Israel J. Math.},
   volume={221},
   date={2017},
   number={2},
   pages={585--603},
   issn={0021-2172},
   review={\MR{3704927}},
   doi={10.1007/s11856-017-1557-0},
}

\bib{Zint}{article}{
   author={Zheng, Weizhe},
   title={Sur la cohomologie des faisceaux $l$-adiques entiers sur les corps
   locaux},
   language={French, with English and French summaries},
   journal={Bull. Soc. Math. France},
   volume={136},
   date={2008},
   number={3},
   pages={465--503},
   issn={0037-9484},
   review={\MR{2415350 (2009d:14015)}},
}
\end{biblist}
\end{bibdiv}

\end{document}